\documentclass{amsart}

\usepackage{paralist}
\usepackage{color}
\usepackage{mathrsfs}
\usepackage{amssymb}
\usepackage{amsmath}
\usepackage{amsfonts}
\usepackage{stmaryrd}
\usepackage{mathtools}
\usepackage{tikz-cd}

\newtheorem*{thma}{Theorem~A}
\newtheorem*{thmb}{Theorem~B}
\newtheorem*{thmc}{Theorem~C}

\newtheorem{theorem}{Theorem}[section]

\newtheorem{proposition}[theorem]{Proposition}

\newtheorem{claim}[theorem]{Claim}
\newtheorem{subclaim}[theorem]{Subclaim}

\newtheorem{question}[theorem]{Question}

\theoremstyle{definition}
\newtheorem{definition}[theorem]{Definition}
\newtheorem{remark}[theorem]{Remark}
\newtheorem{example}[theorem]{Example}

\usepackage[pdftex]{hyperref}
\hypersetup{
    colorlinks=true, 
    linktoc=all,     
    linkcolor=blue,  
    allcolors=blue,
}

\usepackage[framemethod=tikz]{mdframed}

\newcommand{\dom}{\mathrm{dom}}
\newcommand{\bb}{\mathbb}

\newcommand{\mc}{\mathcal}

\newcommand{\ra}{\rightarrow}
\newcommand{\cl}{\mathrm{cl}}

\newcommand{\ZFC}{\sf ZFC}

\newcommand{\CH}{\sf CH}

\title{Generalized almost disjoint families and injective Banach spaces}
\author{Chris Lambie-Hanson}
\address[Lambie-Hanson]{
Institute of Mathematics, 
Czech Academy of Sciences, 
{\v Z}itn{\'a} 25, 110 00 Praha 1, Czech Republic
}
\email{lambiehanson@math.cas.cz}
\urladdr{https://clambiehanson.github.io/}

\author{David Schrittesser}
\address[Schrittesser]{Institute for Advanced Study in Mathematics, 
Harbin Institute of Technology,
92 West Da Zhi Street,
Harbin, Heilongjiang 150001, China}
\email{david.schrittesser@univie.ac.at}

\subjclass[2020]{03E05, 03E50, 46M10. 46M15, 46M18}
\keywords{injective dimension, Banach spaces, almost disjoint families, continuum hypothesis}

\thanks{The first author was supported by the Czech Academy of Sciences 
(RVO 67985840) and the GA\v{C}R project 23-04683S. We thank Jeffrey Bergfalk for helpful 
comments on an earlier draft.}

\begin{document}

\begin{abstract}
  A fundamental open problem in the homological theory of Banach spaces 
  is the calculation of the injective dimension of the Banach space $c_0$. We make a 
  contribution to the study of this problem by proving that, if the 
  Continuum Hypothesis ($\mathsf{CH}$) holds, 
  then the injective dimension of $c_0$ is at 
  least 3. In the course of proving this result, we introduce the notion of 
  an \emph{almost disjoint family} on a topological space $X$, generalizing 
  the classical notion of almost disjoint families of subsets of 
  $\mathbb{N}$, which we feel is of interest in its own right. 
  We prove that, if $\mathfrak{b} = 2^{\aleph_0}$, then there exists an almost disjoint 
  family of cardinality $2^{\aleph_1}$ on the \v{C}ech-Stone remainder of 
  $\mathbb{N}$.
\end{abstract}

\maketitle

\section{Introduction}

Almost disjoint families of subsets of $\bb{N}$, which are fundamental objects 
in combinatorial set theory, have a long and fruitful history of 
applications to the study of Banach spaces (cf.\ \cite[\S 9]{hrusak_ad}). 
In this article, we introduce a broad generalization of the notion of 
``almost disjoint family" and apply it to the study of the question of the 
injective dimension of $c_0$. 

In order to properly introduce our results, let us begin by briefly reviewing the relevant definitions around injective
Banach spaces (see \cite{separably_injective} and 
\cite{sanchez_homological} for all of the 
facts and definitions presented here, plus much more).
A Banach space $E$ is \emph{injective} if, whenever $X$ and $Y$ are Banach 
spaces with $X \subseteq Y$, every operator $t:X \ra E$ can be extended to 
an operator $T:Y \ra E$. There are a number of equivalent characterizations; 
for example, $E$ is injective if and only if it is isomorphic to a complemented 
subspace of $\ell_\infty(\Gamma)$ for some set $\Gamma$. It then follows 
that every Banach space embeds (isometrically) as a (closed) subspace of an 
injective Banach space. This is immediate for finite-dimensional Banach 
spaces, which are themselves injective, and given an infinite-dimensional 
Banach space $X$ and any dense subset $\Gamma \subseteq X$, $X$ is isometric 
to a subspace of $\ell_\infty(\Gamma)$.
 
Given a Banach space $X$, an \emph{injective resolution} of $X$ is an exact 
sequence of the form 
\[
  0 \ra X \ra I_0 \ra I_1 \ra I_2 \ra \cdots
\]
in which $I_n$ is injective for every $n \in \mathbb{N}$. 
The observations of the previous paragraph allow us to form injective 
resolutions of arbitrary Banach spaces as follows. For 
convenience, fix for each Banach space $X$ an injective Banach space 
$E(X)$ and an isometric embedding $\iota(X) : X \ra E(X)$. Now, given a Banach 
space $X$, we can form the following commutative diagram:
\begin{center}
\begin{tikzcd}[arrows = {-stealth}, column sep = {small}]
  0 \ar[r] & X \ar[r, "\iota(X)"] & E(X) = I_0 \ar[rr] \ar[dr, "\pi_0"] && E(c\kappa(X)) 
  = I_1 \ar[dr, "\pi_1"] \ar[rr] && I_2 \dots \\
  &&& c\kappa(X) \ar[ur, "\iota(c\kappa(X))"] 
  && c\kappa^2(X) \ar[ur, "\iota(c\kappa^2(X))"]
\end{tikzcd}
\end{center}
where each map $\pi_n$ is the relevant quotient map; so, e.g., we have 
$c\kappa(X) = I_0/\mathrm{im}(\iota(X))$ and $c\kappa^2(X) = 
I_1/\mathrm{im}(\iota(c\kappa(X))$. The top line of the above diagram is 
then an injective resolution of $X$, $0 \ra X \ra I_0 \ra I_1 \ra I_2 \ra \cdots$. The \emph{injective 
dimension of $X$}, denoted $\mathrm{id}(X)$, is equal to $0$ if 
$X$ is injective. Otherwise, it is the least $n < \omega$ such that 
$c\kappa^n(X)$ is injective, if such an $n$ exists, or $\infty$ if no such 
$n$ exists. It can be shown (cf.\ \cite{sanchez_homological}) 
that $\mathrm{id}(X)$ is independent of our choices 
of $E(Y)$ and $\iota(Y)$, and that $\mathrm{id}(X)$ is equal to the least $n$, 
if it exists, for which there is an injective resolution of $X$ of the form 
\[
0 \ra X \ra I_0 \ra \cdots \ra I_n \ra 0 \ra \cdots.
\]

Very little is known about the injective dimension of most Banach spaces 
(see \cite{sanchez_homological} for a few exceptions). For example, 
the injective dimension of the space $c_0$ is unknown; this is the motivating 
question of this paper. We find this question of interest in its own right, but we also 
came to it via a foundational question in \emph{condensed mathematics} 
(cf.\ \cite{CS1}). To introduce the question, suppose that $S_0$ and $S_1$ 
are extremally disconnected 
compact Hausdorff spaces. It is not hard to see that the condensed abelian groups 
$\mathbb{Z}[\underline{S_i}]$ are both compact projective objects in the category 
$\mathsf{Cond(Ab)}$ of condensed abelian groups. Also, in general, the tensor 
product $\mathbb{Z}[\underline{S_0}] \otimes \mathbb{Z}[\underline{S_1}] = 
\mathbb{Z}[\underline{S_0 \times S_1}]$ is \emph{not} projective 
in $\mathsf{Cond(Ab)}$ (cf.\ \cite[Proposition 3.7]{complex}). Clausen and 
Scholze raised the following question: in this situation, must 
$\mathbb{Z}[\underline{S_0 \times S_1}]$ have finite projective dimension in 
$\mathsf{Cond(Ab)}$? To the best of our knowledge, the question remains open; 
as Clausen and Scholze observed, one way to give a \emph{negative} answer would be 
to show that $c_0$ has infinite injective dimension in the category of Banach spaces 
(\cite{scholze_email}, also cf.\ \cite[Appendix to Lecture III]{complex}).

To begin exploring this question, let us consider the beginning of the 
commutative diagram yielding an injective resolution of $c_0$ (recall that 
$\mathbb{N}^* = \beta \mathbb{N} \setminus \mathbb{N}$ is the \v{C}ech-Stone 
remainder of $\mathbb{N}$ and $\mathbb{N}^*$ has a natural dense set indexed 
by $[\omega]^\omega$, yielding a closed embedding of $C(\mathbb{N}^*)$ 
into $\ell_\infty([\omega]^\omega)$ (see Section \ref{section: ad_family} below)):
\begin{center}
\begin{tikzcd}[arrows = {-stealth}, column sep = {small}]
  0 \ar[r] & c_0 \ar[r] & \ell_\infty \ar[rr] \ar{dr} & & \ell_\infty([\omega]^\omega) \ar[dr] & \dots \\
  & & & \ell_\infty/c_0 \cong C(\mathbb{N}^*) \ar{ur} & & 
  \ell_\infty([\omega]^\omega)/C(\mathbb{N}^*)
\end{tikzcd}
\end{center}
Phillips \cite{phillips} and Sobczyk \cite{sobczyk} proved around 1940 that $c_0$ 
is not an injective Banach space. In the 1960s, Amir \cite{amir} proved that 
$\ell_\infty/c_0$ is not injective, and hence $\mathrm{id}(c_0) \geq 2$.
Here, we provide a modest improvement to these results under the additional 
assumption of the Continuum Hypothesis ($\CH$).

\begin{thma}
  Suppose that $\CH$ holds. Then $\mathrm{id}(c_0) \geq 3$.
\end{thma}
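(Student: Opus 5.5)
The plan is to show that $c\kappa^2(c_0) = \ell_\infty([\omega]^\omega)/C(\mathbb{N}^*)$ is not injective. Write $Q$ for this quotient. The argument compares two cardinal invariants: a lower bound on the size of copies of $c_0(\Gamma)$ inside $Q$, and an upper bound on the density of $Q$.

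\emph{Density bound.} Under $\CH$ we have $|[\omega]^\omega| = \aleph_1$. Hence $\ell_\infty([\omega]^\omega)$ has density at most $2^{\aleph_1}$, and so does its quotient $Q$. On the other hand, $\ell_\infty(\Gamma)$ with $|\Gamma| = 2^{\aleph_1}$ has density $2^{2^{\aleph_1}} > 2^{\aleph_1}$. Therefore $Q$ contains no isomorphic copy of $\ell_\infty(2^{\aleph_1})$.

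\emph{Reduction via Rosenthal.} Recall the classical consequence of Rosenthal's theorem: if $E$ is injective and contains an isomorphic copy of $c_0(\Gamma)$, then $E$ contains a copy of $\ell_\infty(\Gamma)$. The argument runs as follows. By injectivity, the embedding $c_0(\Gamma) \ra E$ extends to an operator $\ell_\infty(\Gamma) \ra E$. This operator is an isomorphism on $c_0(\Gamma)$, so Rosenthal's theorem yields a subset $\Gamma' \subseteq \Gamma$ with $|\Gamma'| = |\Gamma|$ such that the operator is an isomorphism on $\ell_\infty(\Gamma')$. Combined with the density bound, it therefore suffices to embed $c_0(2^{\aleph_1})$ into $Q$. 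That embedding is the heart of the matter.

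\emph{Source of the family.} This is where the generalized almost disjoint families on $\mathbb{N}^*$ enter. Since $\CH$ implies $\mathfrak{b} = 2^{\aleph_0}$, the result announced in the abstract provides an almost disjoint family $\{A_\alpha : \alpha < 2^{\aleph_1}\}$ on $\mathbb{N}^*$.

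\emph{Translating to functions.} Use the natural dense set of $\mathbb{N}^*$ indexed by $[\omega]^\omega$. Let each $A_\alpha$ determine a set $U_\alpha \subseteq [\omega]^\omega$ of indices, and put $f_\alpha = \chi_{U_\alpha} \in \ell_\infty([\omega]^\omega)$. I would arrange the definition of almost disjointness on a space so that it yields exactly two properties.
\begin{enumerate}
\item[(i)] \emph{Non-triviality.} Each $f_\alpha$, and indeed each finite signed combination $\sum_{i} \pm f_{\alpha_i}$, stays at distance at least some fixed $\delta > 0$ from $C(\mathbb{N}^*)$. The reason is that $U_\alpha$ accumulates both to points where it is dense and to points where its complement is dense, so no continuous function approximates it.
\item[(ii)] \emph{Almost disjointness.} For $\alpha \neq \beta$, the product $f_\alpha f_\beta$ lies in $C(\mathbb{N}^*)$, or is uniformly close to it. In other words, the overlap $U_\alpha \cap U_\beta$ is ``negligible modulo continuous functions.''
\end{enumerate}

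\emph{Checking the $c_0$ estimate.} From these properties I would verify
\[
\delta \max|c_i| \le \Bigl\| \sum c_i [f_{\alpha_i}] \Bigr\|_Q \le C \max|c_i|.
\]
The upper bound comes from (ii): modify the $f_{\alpha_i}$ by continuous functions so that their supports become genuinely disjoint, which should be possible with control on norms for finitely many indices. The lower bound comes from (i): localize near a point witnessing the discontinuity of $f_{\alpha_i}$ for the index with maximal $|c_i|$, chosen away from the closures of the other overlaps. This gives an isomorphic copy of $c_0(2^{\aleph_1})$ in $Q$, and hence $Q$ is not injective, so $\mathrm{id}(c_0) \ge 3$.

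\emph{Main obstacle.} I expect the hardest part to be the lower bound in this last step, together with setting up the definition of almost disjoint family on $\mathbb{N}^*$ so that the disjointification modulo $C(\mathbb{N}^*)$ is possible. A priori, continuous corrections may interact across many indices at once. My first attempt would be to make $A_\alpha$ closed subsets of $\mathbb{N}^*$ whose pairwise intersections have empty interior, or are nowhere dense in a suitable uniform sense, and then handle finite combinations by induction on the number of indices.
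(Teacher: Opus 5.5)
Your outline follows the paper's route: take the family from Theorem C, embed $c_0(2^{\aleph_1})$ into $\ell_\infty([\omega]^\omega)/C(\mathbb{N}^*)$, then combine Rosenthal with a size count. Your density count is fine. The gap is the step you yourself flag as the main obstacle. You require the two-sided estimate $\delta\max_i|c_i|\le\|\sum_i c_i[f_{\alpha_i}]\|$ for \emph{every} finite subfamily, but you give no proof of the lower half, and it does not follow from almost disjointness.

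Here is a counterexample. On $K=\omega+1$ with $D=\omega$, take $A\subseteq\omega$ infinite and co-infinite. Then $\{A,\omega\setminus A\}$ is an almost disjoint family in the sense of Definition~\ref{ad_def}: both sets are regular open and non-compact, and their intersection is empty. Yet $f_A+f_{\omega\setminus A}$ is the constant function $1$, so $[f_A]+[f_{\omega\setminus A}]=0$. This is exactly where your localization fails. A boundary point of $U_{\alpha_i}$ automatically lies outside the clopen overlaps $U_{\alpha_i}\cap U_{\alpha_j}$, but it may still lie in $\cl(U_{\alpha_j}\setminus U_{\alpha_i})$. There the term $c_jf_{\alpha_j}$ can cancel the jump of $c_if_{\alpha_i}$. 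Whether some lower bound holds for the specific family of Theorem C is unclear, and you give no argument for it.

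The missing idea is that no lower bound is needed. Rosenthal's result, quoted as Theorem~\ref{thm: rosenthal_2}, needs only normalized vectors and the \emph{upper} estimate $\|\sum_i r_ix_{\alpha_i}\|\le C\sup_i|r_i|$. It returns $\Gamma'\subseteq\Gamma$ with $|\Gamma'|=|\Gamma|$ whose closed span is isomorphic to $c_0(\Gamma')$. Your density count only uses $|\Gamma'|=2^{\aleph_1}$, so this suffices.

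The paper therefore proves only two things:
\begin{itemize}
\item $\|[f_\alpha]\|=1/2$. Choose $x\in\cl(U_\alpha)\setminus U_\alpha$ and use regular openness to get $x\in\cl(D\cap U_\alpha)\cap\cl(D\setminus U_\alpha)$.
\item $\|\sum_{\alpha\in A}r_\alpha[f_\alpha]\|\le\sup_\alpha|r_\alpha|$. Subtract the continuous function equal to $\sum_{\beta\in B}r_\beta$ on each set $U_B=\bigcap_{\beta\in B}U_\beta\setminus\bigcup_{\alpha\in A\setminus B}U_\alpha$ with $|B|\ge 2$.
\end{itemize}

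The second step needs the pairwise intersections to be genuinely clopen. The paper's definition supplies this: regular open, non-compact sets with compact pairwise intersections, which are clopen in a Hausdorff space. This is also the notion Theorem C actually delivers. Your tentative alternatives would not support this disjointification: closed sets with merely nowhere dense pairwise intersections, or $f_\alpha f_\beta$ only uniformly close to $C(\mathbb{N}^*)$.
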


In the course of proving Theorem A, we introduce and begin an analysis of 
the notion of an \emph{almost disjoint family} on a topological space 
$X$, generalizing the classical notion of an almost disjoint family of 
infinite sets of natural numbers. This notion seems of interest in its own 
right and worthy of further investigation. 

Given a compact Hausdorff space $K$, the 
existence of large almost disjoint families on $K$ has implications regarding 
the injective dimension of the Banach space $C(K)$. This should not be 
terribly surprising, as elegant proofs of the aforementioned results of 
Phillips, Sobczyk, and Amir, due to Whitley and Rosenthal, respectively, 
utilize the existence of classical almost disjoint 
families of cardinality $2^{\aleph_0}$ (cf.\ \cite{whitley},
\cite{rosenthal} and \cite[Theorem 1.25]{separably_injective}).

To be slightly more precise, suppose that $K$ is a compact Hausdorff space, and let 
$D$ be a dense subset of $K$. Then there is a natural embedding of 
$C(K)$ as a closed subspace of $\ell_\infty(D)$. Therefore, the following 
is the beginning of a commutative diagram yielding an injective resolution of 
$C(K)$:
\begin{center}
\begin{tikzcd}[arrows = {-stealth}, column sep = {small}]
  0 \ar[r] & C(K) \ar[r] & \ell_\infty(D) \ar[dr] & \dots \\ 
  &&& \ell_\infty(D)/C(K)
\end{tikzcd}
\end{center}
We establish the following connection between almost disjoint families and 
injective spaces:

\begin{thmb}
  Suppose that $K$ is a compact Hausdorff space, $D \subseteq K$ is dense, 
  and there is an almost disjoint family on $K$ of cardinality $\lambda$ 
  such that $2^\lambda > 2^{|D|}$. Then 
  $\ell_\infty(D)/C(K)$ is not injective, and hence $\mathrm{id}(C(K)) \geq 2$.
\end{thmb}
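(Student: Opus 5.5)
The plan is to adapt the Whitley/Rosenthal argument for Amir's theorem. First, find an isomorphic copy of $c_0(\lambda)$ inside $Q := \ell_\infty(D)/C(K)$ using the almost disjoint family. Then, if $Q$ were injective, the inclusion $c_0(\lambda)\hookrightarrow Q$ would extend to an operator $T:\ell_\infty(\lambda)\ra Q$. I would then show that this forces $Q$ to contain a copy of $\ell_\infty(\lambda)$, which is impossible for cardinality reasons because $|Q| \le |\ell_\infty(D)| = 2^{|D|} < 2^\lambda$.

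\textbf{Step 1: building $c_0(\lambda)$ in $Q$.} Let $\{A_\alpha : \alpha<\lambda\}$ be the almost disjoint family on $K$. To each $A_\alpha$ I attach a function $g_\alpha\in\ell_\infty(D)$ with values in $[0,1]$; depending on the exact definition, this might be the indicator of a suitable subset of $D$ determined by $A_\alpha$. The two properties I need are these.
\begin{enumerate}
\item[(a)] Each $g_\alpha$ is uniformly far from $C(K)$: $\|g_\alpha + C(K)\|_Q \ge \delta$ for a fixed $\delta>0$. I expect this to come from the fact that $A_\alpha$ accumulates at a point of $K$ where $g_\alpha$ has oscillation $1$ on $D$, so any continuous $f$ errs by at least $1/2$.
\item[(b)] The almost disjointness should give, for any finite $F\subseteq\lambda$ and signs $\varepsilon_\alpha$, that $\sum_{\alpha\in F}\varepsilon_\alpha g_\alpha$ differs from a function bounded by $1$ only by an error that extends continuously to $K$. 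The reason is that the pairwise overlaps are ``small'' in exactly the sense that kills them modulo $C(K)$.
\end{enumerate}
Together, (a) and (b) give $\delta\max|c_\alpha| \lesssim \|\sum c_\alpha g_\alpha + C(K)\| \le \max|c_\alpha|$. So $e_\alpha\mapsto g_\alpha + C(K)$ extends to an isomorphic embedding $j:c_0(\lambda)\ra Q$. This step depends most on the precise definition of an almost disjoint family on $K$, and the main work is verifying (a) and (b) from that definition.

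\textbf{Step 2: extending, and Rosenthal's theorem.} Assume toward a contradiction that $Q$ is injective. Then $j$ extends to an operator $T:\ell_\infty(\lambda)\ra Q$ with $\|T e_\alpha\|\ge\delta>0$ for all $\alpha<\lambda$. By Rosenthal's theorem on operators from $\ell_\infty(\Gamma)$ (if $\inf_\gamma\|Te_\gamma\|>0$, then $T$ is an isomorphism on $\ell_\infty(\Gamma')$ for some $\Gamma'\subseteq\Gamma$ with $|\Gamma'|=|\Gamma|$), there is $\Lambda\subseteq\lambda$ with $|\Lambda|=\lambda$ such that $T\restriction\ell_\infty(\Lambda)$ is an isomorphic embedding into $Q$. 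In particular $T$ is injective on the $2^\lambda$ indicator functions $1_B$, $B\subseteq\Lambda$. Hence $2^\lambda\le|Q|\le|\ell_\infty(D)|\le(2^{\aleph_0})^{|D|}=2^{|D|}$ (when $D$ is infinite; finite $D$ is trivial since then $K=D$ is finite and $\lambda$ is finite). This contradicts $2^\lambda>2^{|D|}$, so $Q$ is not injective.

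\textbf{Conclusion and main obstacle.} Since $0\ra C(K)\ra\ell_\infty(D)\ra Q\ra0$ is the start of an injective resolution with $c\kappa(C(K))\cong Q$ not injective, and since $\mathrm{id}$ is independent of the choices made, we get $\mathrm{id}(C(K))\ge 2$. Here $C(K)$ itself is not injective because $\lambda$ infinite and the argument gives $\mathrm{id}\ge1$; more simply, $\mathrm{id}(C(K))\le1$ would make $Q$ injective. I expect the main obstacle to be Step 1(b), the uniform upper bound modulo $C(K)$. If Rosenthal's theorem proves awkward to invoke, a fallback is a direct Whitley-style counting argument: for each functional $\varphi\in Q^*$ only countably many $\alpha$ have $\varphi(Tg)$ sensitive to $e_\alpha$, which still yields $2^\lambda$ distinct images.
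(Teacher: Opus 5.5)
Your architecture matches the paper's: the classes of $g_\alpha=\chi_{D\cap U_\alpha}$ give a copy of $c_0(\lambda)$ in $Q=\ell_\infty(D)/C(K)$, then Rosenthal, then a cardinality count. However, one assertion in Step~1 is false. Properties (a) and (b) do not give the lower estimate $\delta\max|c_\alpha|\lesssim\|\sum c_\alpha g_\alpha+C(K)\|$, so $e_\alpha\mapsto[g_\alpha]$ need not be an isomorphic embedding, or even injective. For instance, on $K=[0,1]$ the sets $U_0=[0,1/2)$ and $U_1=(1/2,1]$ form an almost disjoint family. Take $D=K\setminus\{1/2\}$. Both classes $[\chi_{D\cap U_i}]$ have norm $1/2$ and the upper estimate holds. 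Yet $\chi_{D\cap U_0}+\chi_{D\cap U_1}$ is the constant $1$ on $D$, so $[g_0]+[g_1]=0$. This pair can be adjoined, on a disjoint clopen summand, to any large almost disjoint family. This is exactly why the paper invokes Rosenthal's subfamily result, Theorem~\ref{thm: rosenthal_2}. From $\|[g_\alpha]\|=1/2$ and the upper estimate alone, it extracts $\Gamma'\subseteq\lambda$ with $|\Gamma'|=\lambda$ on which the $[g_\alpha]$ do span $c_0(\Gamma')$, and only then applies Theorem~\ref{thm: rosenthal}.

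Fortunately, your Step~2 never uses the lower estimate. Extending $j$ to $T:\ell_\infty(\lambda)\ra Q$ by injectivity of $Q$ needs only that $j$ is bounded, i.e.\ (b). Rosenthal's theorem on operators out of $\ell_\infty(\Gamma)$ needs only $\inf_\alpha\|Te_\alpha\|>0$, i.e.\ (a). So once the isomorphism claim is deleted, your argument is correct. It is slightly more economical than the paper's, since it bypasses Theorem~\ref{thm: rosenthal_2}; Theorem~\ref{thm: rosenthal} is itself a consequence of that operator theorem.

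Also, (a) and (b) are not the obstacle you anticipate. They are the paper's two claims, and you should say where the hypotheses enter.
\begin{itemize}
\item For (a), non-compactness gives $x\in\cl(U_\alpha)\setminus U_\alpha$, and regular openness gives $x\notin\mathrm{int}\,\cl(U_\alpha)$. So $x$ lies in the closure of both $D\cap U_\alpha$ and $D\setminus U_\alpha$, which forces distance at least $1/2$ from $C(K)$.
\item For (b), each $U_\alpha\cap U_\beta$ is compact, hence closed in the Hausdorff space $K$, hence clopen. So for finite $A\subseteq\lambda$, the sets $U_B$ ($B\subseteq A$, $|B|\ge2$) of points lying in exactly the $U_\beta$ with $\beta\in B$ are clopen. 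Subtracting the continuous function $\sum_B(\sum_{\beta\in B}r_\beta)\chi_{U_B}$ from $\sum_{\alpha\in A}r_\alpha g_\alpha$ leaves values that are either $0$ or a single $r_\alpha$.
\end{itemize}
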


Theorem A will then follow from Theorem B together with the following result, 
using $\mathfrak{b} = 2^{\aleph_0}$ (a consequence of $\CH$) to construct a specific 
almost disjoint family.

\begin{thmc}
  Suppose that $\mathfrak{b} = 2^{\aleph_0}$. Then there is an almost disjoint family on 
  $\mathbb{N}^*$ of cardinality $2^{\aleph_1}$.
\end{thmc}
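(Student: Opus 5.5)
The plan is to realize the family as the branches of a binary tree of height $\omega_1$ of infinite subsets of $\omega$, which I will transfer to $\mathbb{N}^*$ via $A \mapsto A^*$ (with $A^* = \cl_{\beta\mathbb{N}}(A) \setminus \mathbb{N}$). I am not certain how the generalized definition of an almost disjoint family on $\mathbb{N}^*$ reads, so the verification in the last paragraph is conditional on the form I assume there.

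\textbf{Building the tree.} I will construct, by recursion on $\alpha < \omega_1$, infinite sets $A_s \subseteq \omega$ for $s \in 2^{<\omega_1}$ satisfying:
\begin{itemize}
\item[(i)] $A_t \subseteq^* A_s$ whenever $s \subseteq t$;
\item[(ii)] $A_{s^\frown 0} \cap A_{s^\frown 1} = \emptyset$.
\end{itemize}
At successor stages I split $A_s$ into two infinite pieces. At a countable limit $\alpha$ and $s \in 2^\alpha$, the sequence $\langle A_{s \restriction \beta} : \beta < \alpha\rangle$ is countable and $\subseteq^*$-decreasing. It therefore has an infinite pseudo-intersection, and I take $A_s$ to be one.

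For a branch $x \in 2^{\omega_1}$, the candidate object is built from $\langle A_{x\restriction\alpha}^* : \alpha<\omega_1\rangle$. Either I take the closed set $F_x = \bigcap_\alpha A_{x\restriction \alpha}^*$, or, if the definition wants open sets, a suitable open set $U_x$ whose closure is controlled by $F_x$. Distinct branches split at some node, so (ii) makes the $F_x$ pairwise disjoint. This yields $2^{\aleph_1}$ objects.

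\textbf{Where $\mathfrak{b} = 2^{\aleph_0}$ enters.} Pairwise disjoint nonempty closed sets alone are cheap in $\mathsf{ZFC}$, so the real definition must impose a nontrivial separation or largeness condition. Typical candidates are: the traces on the dense set $[\omega]^\omega$ are almost disjoint in the appropriate sense, or for distinct $x,y$ the sets cannot be separated by too few clopen sets. The plan is to strengthen the recursion so that this condition is met. Let $\mathfrak{c} = 2^{\aleph_0}$. I will fix an enumeration of length $\mathfrak{c}$ of the potential witnesses to failure; these are countable sequences of clopen sets, or equivalently functions in $\omega^\omega$. The only real choice in the construction is at limit stages, and there I will choose the pseudo-intersections $A_s$ carefully. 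Each relevant constraint should be expressible as ``$A_s$ must avoid the set coded by a certain $f \in \omega^\omega$ modulo finite''. I will code the pseudo-intersection by a function $g$ (for example, the enumeration of $A_s$ thinned out along the decreasing sequence) and require $g$ to dominate fewer than $\mathfrak{c}$ functions. Since $\mathfrak{b} = \mathfrak{c}$, any fewer than $\mathfrak{c}$ such functions are bounded, so such a $g$ exists. If the witnesses must be handled globally rather than per node, I will interleave a length-$\mathfrak{c}$ bookkeeping of witnesses with the $\omega_1$-recursion. In that version each branch's limit sets are chosen to dominate all witnesses enumerated before a given stage, and again $\mathfrak{b} = \mathfrak{c}$ makes each step possible.

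\textbf{Main obstacle.} The hard step is the limit stage. There I must produce a single pseudo-intersection that simultaneously
\begin{itemize}
\item[(a)] stays below the entire countable branch (to keep the tree condition), and
\item[(b)] escapes every one of fewer than $\mathfrak{c}$ separating constraints, so that the final objects for distinct branches still satisfy the non-separation or almost-disjointness clause of the definition.
\end{itemize}
My first attempt will be a Hausdorff-gap style diagonalization. I translate the $\subseteq^*$-decreasing sequence and the constraints into functions on $\omega$, use $\mathfrak{b} = \mathfrak{c}$ to find one function bounding all the constraint functions, and build $A_s$ from that function. After the construction, I will verify each clause of the definition for distinct branches $x \neq y$ by looking at their splitting node and using (ii) together with the domination property secured there.
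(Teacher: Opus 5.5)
There is a genuine gap, and it lies in the architecture itself, not only in the details you left open. The paper's definition asks for \emph{regular open, non-compact} subsets of $\mathbb{N}^*$ whose pairwise intersections are compact (in $\mathbb{N}^*$, equivalently clopen). Your disjoint-splitting tree makes the branch objects pairwise disjoint, and your verification of the intersection clause relies on exactly that. But $\mathbb{N}^*$ has at most $2^{\aleph_0}$ pairwise disjoint non-compact (hence nonempty) open subsets. Each one contains some basic clopen $N_y$, and disjointness forces the chosen $y$'s to be almost disjoint, hence distinct. Under $\mathsf{CH}$, which satisfies your hypothesis $\mathfrak{b} = 2^{\aleph_0}$ and is the case needed for Theorem A, we have $2^{\aleph_0} = \aleph_1 < 2^{\aleph_1}$. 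So all but at most $\aleph_1$ of your sets would be empty, and no choice of pseudo-intersections at limit stages can repair this.

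To get more than $2^{\aleph_0}$ members in a space of weight $2^{\aleph_0}$, the members must overlap. The paper builds an \emph{increasing} tree $\langle x_\sigma \mid \sigma \in {^{<\omega_1}}\omega \rangle$ with $x_\sigma \subsetneq^* x_\tau$ for $\sigma \subsetneq \tau$ and $x_\sigma \cap x_\tau =^* x_{\sigma\wedge\tau}$. It then takes $U_f = \bigcup_{\eta<\omega_1} N_{x_{f\restriction\eta}}$. Now $U_f \cap U_g$ is the nonempty clopen set $N_{x_{f\restriction\eta}}$ at the splitting level, and non-compactness is automatic from the strictly increasing $\omega_1$-chain of clopen sets.

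You have also misidentified where the difficulty, and hence $\mathfrak{b} = 2^{\aleph_0}$, lies. It is not a separation property between distinct members. It is the regular openness of each $U_f$, which amounts to clause (3) of Theorem~\ref{thm: labeled_tree}. For every $y \in [\omega]^\omega$ and every branch $f$, either $y \subseteq^* x_{f\restriction\eta}$ for some $\eta$, or some infinite $y' \subseteq y$ is almost disjoint from every $x_{f\restriction\eta}$. Since there are $2^{\aleph_1}$ branches, this cannot be arranged branch by branch. The paper instead does the following:
\begin{itemize}
\item It runs a recursion of length $2^{\aleph_0}$ through an enumeration of $[\omega]^\omega$, adding one node at each stage.
\item It keeps everything below a $\subsetneq^*$-increasing ceiling $\langle a_\eta \rangle$.
\item In the critical case it finds $y'_\beta \subseteq y_\beta$ almost disjoint from all nodes built so far. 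The new node gets $\omega$ successors $x_{\sigma_\xi} \cup y'_{\beta,\ell+1}$, where the $y'_{\beta,\ell}$ partition $y'_\beta$, so every branch misses some successor and the corresponding piece witnesses (3)(b).
\end{itemize}
Your idea of using $\mathfrak{b} = 2^{\aleph_0}$ at limit nodes to dominate fewer than $2^{\aleph_0}$ functions does match one ingredient. The paper's limit nodes are $\bigcup_n x_{\sigma_\xi\restriction\gamma_n} \cap [h^*(n),\omega)$, with $h^*$ dominating the $h_\sigma$ for the fewer than $2^{\aleph_0}$ nodes already built, and this preserves the meet condition. But your proposal never fixes the definition or the condition that actually has to be diagonalized. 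It therefore has nothing concrete to verify at the point where the theorem is actually proved.
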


The structure of the remainder of the paper is as follows. In Section 
\ref{section: ad_family}, we introduce our generalization of ``almost disjoint 
families" and prove Theorem B. In Section \ref{section: n-star}, we prove 
Theorem C, which then immediately yields Theorem A. We close the paper in 
Section \ref{section: questions} with a few remaining open questions.

\subsection{Notation}

We use $\omega$ and $\mathbb{N}$ interchangeably to denote the set of natural 
numbers. We let $[\omega]^\omega$ denote the set of all infinite subsets of 
$\omega$.
If $x, y \in [\omega]^\omega$, then $x \subseteq^* y$ is the assertion that $x$ is a subset of 
$y$ \emph{mod finite}, i.e., $|x \setminus y| < \aleph_0$. In particular, $x \subsetneq^* y$ is 
the assertion that $|x \setminus y| < \aleph_0$ and $|y \setminus x| = \aleph_0$. Similarly, 
$x =^* y$ is the assertion that $x$ and $y$ are equal mod finite, i.e., 
$|x \triangle y| < \aleph_0$.

Given a nonempty set $\Gamma$, $c_0(\Gamma)$ is the Banach space of all functions 
$f:\Gamma \rightarrow \mathbb{R}$ converging to zero, i.e., all functions $f$ such 
that, for all $\varepsilon > 0$, the set $\{\alpha \in \Gamma \mid |f(\alpha)| > \varepsilon\}$ 
is finite. $\ell_\infty(\Gamma)$ is the Banach space of all bounded functions from 
$\Gamma$ to $\bb{R}$ (both $c_0(\Gamma)$ and $\ell_\infty(\Gamma)$ are given the 
sup-norm). $c_0$ and $\ell_\infty$ denote $c_0(\mathbb{N})$ and 
$\ell_\infty(\mathbb{N})$, 
respectively. If $K$ is a compact Hausdorff space, then $C(K)$ denotes the 
Banach space of all continuous functions from $K$ to $\bb{R}$, again with the 
sup-norm.

\section{Generalized almost disjoint families} \label{section: ad_family}

Recall that a set $\mc{A} \subseteq [\omega]^\omega$ is called an 
\emph{almost disjoint family} if $A \cap B$ is finite for all distinct 
$A,B \in \mc{A}$. Almost disjoint families are fundamental objects in 
combinatorial set theory and have proven to be of considerable utility in 
general topology and functional analysis. We begin this section by introducing a 
generalization of this notion.

\begin{definition} \label{ad_def}
  Suppose that $X$ is a topological space. A family $\mc A$ is an \emph{almost disjoint family 
  on $X$} if
  \begin{enumerate}
    \item every element of $\mc A$ is a regular open subset of $X$ 
    that is not compact;
    \item for all distinct $U_0, U_1 \in \mc A$, $U_0 \cap U_1$ is compact.
  \end{enumerate}
\end{definition}

\begin{example}
  Suppose that $\mc A$ is an almost disjoint family of subsets of $\omega$, 
  i.e., $\mc A$ is a collection of infinite (and co-infinite) subsets of $\omega$ with pairwise finite intersection. Then, as we leave the reader to verify, $\mc A$ is 
  also an almost disjoint family in the sense of Definition \ref{ad_def} on the space $\omega + 1$ (i.e., 
  the convergent sequence).
\end{example}

\begin{remark}
  We expect that, at least among compact Hausdorff spaces, Definition \ref{ad_def} 
  will be of most interest for spaces 
  $X$ that are totally disconnected but not extremally disconnected. Indeed, if 
  $X$ is Hausdorff and $\mc{A}$ is an almost disjoint family on $X$, then 
  $U_0 \cap U_1$ is a clopen subset of $X$ for all distinct $U_0,U_1 \in 
  \mc{A}$. The existence of interesting 
  almost disjoint families on $X$ would therefore entail many clopen subsets of 
  $X$. It then seems natural to focus on spaces $X$ that are totally disconnected, 
  i.e., spaces that have a basis of clopen sets. 
  
  On the other hand, if $X$ is an extremally disconnected compact Hausdorff 
  space, then all regular open subsets of $X$ are closed and hence compact, and 
  therefore there are \emph{no} nonempty almost disjoint families on $X$. 
  This makes sense in the context of Theorem B, since, if $X$ is an extremally 
  disconnected compact Hausdorff space, then $C(X)$ is an injective Banach space 
  (cf.\ \cite[\S 1.3]{separably_injective}).
\end{remark}

A key ingredient in our proof of Theorem B will be the following 
result of Rosenthal \cite{rosenthal}.

\begin{theorem} \label{thm: rosenthal}
  Suppose that $E$ is an injective Banach space and $E$ contains an isomorphic 
  copy of $c_0(\Gamma)$ for some set $\Gamma$. Then $E$ contains an isomorphic 
  copy of $\ell_\infty(\Gamma)$. \qed
\end{theorem}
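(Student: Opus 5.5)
The plan is to extend a copy of $c_0(\Gamma)$ to all of $\ell_\infty(\Gamma)$ using injectivity, and then shrink $\Gamma$ to a subset of full cardinality on which the extension is bounded below. Fix an isomorphic embedding $T : c_0(\Gamma) \ra E$, with $\|Tx\| \geq \delta\|x\|$ and $\|T\| \leq 1$ after rescaling. Since $c_0(\Gamma) \subseteq \ell_\infty(\Gamma)$ and $E$ is injective, $T$ extends to an operator $\tilde T : \ell_\infty(\Gamma) \ra E$. It then suffices to find $\Gamma' \subseteq \Gamma$ with $|\Gamma'| = |\Gamma|$ such that $\tilde T \restriction \ell_\infty(\Gamma')$ is bounded below. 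Indeed, $\ell_\infty(\Gamma') \cong \ell_\infty(\Gamma)$ via a bijection $\Gamma' \to \Gamma$, and $\ell_\infty(\Gamma')$ sits inside $\ell_\infty(\Gamma)$ as the functions supported on $\Gamma'$.

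To get the lower bound, for each $\gamma \in \Gamma$ I would pick $\varphi_\gamma \in E^*$ with $\|\varphi_\gamma\| \leq 1$ and $\varphi_\gamma(Te_\gamma) \geq \delta$, where $e_\gamma$ is the unit vector. Define the finitely additive measure $\mu_\gamma(A) = \varphi_\gamma(\tilde T 1_A)$ on $\mathcal P(\Gamma)$. Its total variation is at most $\|\tilde T\|$, up to a constant $2$ for real measures, uniformly in $\gamma$, and $\mu_\gamma(\{\gamma\}) \geq \delta$. For $x \in \ell_\infty(\Gamma)$ we have $\varphi_\gamma(\tilde T x) = \int x\, d\mu_\gamma$. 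This integral identity holds for simple functions by definition and extends to all of $\ell_\infty(\Gamma)$ by density of simple functions and continuity.

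The key step is \emph{Rosenthal's disjointness lemma}: given uniformly bounded finitely additive measures $(\mu_\gamma)_{\gamma \in \Gamma}$ on $\mathcal P(\Gamma)$ and $\varepsilon > 0$, there is $\Gamma' \subseteq \Gamma$ with $|\Gamma'| = |\Gamma|$ and $|\mu_\gamma|(\Gamma' \setminus \{\gamma\}) < \varepsilon$ for every $\gamma \in \Gamma'$. Granting this with $\varepsilon = \delta/4$, take $x \in \ell_\infty(\Gamma')$ and choose $\gamma \in \Gamma'$ with $|x(\gamma)| \geq \tfrac{3}{4}\|x\|$. Then
\[
\|\tilde T x\| \geq |\varphi_\gamma(\tilde T x)| \geq |x(\gamma)|\,\mu_\gamma(\{\gamma\}) - \|x\|\,|\mu_\gamma|(\Gamma'\setminus\{\gamma\}) \geq \big(\tfrac34\delta - \tfrac14\delta\big)\|x\| = \tfrac{\delta}{2}\|x\|,
\]
which is the required lower bound.

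The main obstacle is proving the disjointness lemma for arbitrary, possibly uncountable, $\Gamma$.
\begin{itemize}
\item For countable $\Gamma$, I would use the classical argument. Partition $\mathbb{N}$ into infinitely many infinite pieces; for each $n$, only finitely many pieces can carry $|\mu_n|$-mass at least $\varepsilon/2$. One then diagonalizes to thin out the index set inductively.
\item For uncountable $\kappa = |\Gamma|$, I would first fix a family of $\kappa$ many pairwise disjoint subsets of $\Gamma$, each of size $\kappa$. For each $\gamma$, bounded variation implies that at most $N = \lceil 2\|\tilde T\|/\varepsilon\rceil$ members of any disjoint family can have $|\mu_\gamma|$-measure at least $\varepsilon$.
\item This gives a set mapping $\gamma \mapsto F(\gamma)$ with each $F(\gamma)$ finite. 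I would then look for a free set of size $\kappa$, using a Hajnal free-set type argument, or a counting argument over the disjoint blocks.
\item Making the uncountable combinatorics work, while simultaneously controlling the mass of $\Gamma' \setminus \{\gamma\}$ and not just of individual blocks, is the delicate part. There I would follow Rosenthal's original argument.
\end{itemize}
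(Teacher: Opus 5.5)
The paper gives no proof of this statement and simply cites Rosenthal. Your reduction is sound and is exactly Rosenthal's route. The extension $\tilde T$, the finitely additive $\mu_\gamma$ with $\mu_\gamma(\{\gamma\}) \geq \delta$ and uniformly bounded variation, and the estimate $\|\tilde T x\| \geq \tfrac{\delta}{2}\|x\|$ on $\ell_\infty(\Gamma')$ are all correct.

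The gap is that the disjointness lemma, which carries all the real content, is never proved, and the strategy you sketch for it would not work as stated. A free set for the set mapping $\gamma \mapsto F(\gamma)$ only guarantees that each $|\mu_\gamma|$ is small on each \emph{individual} block. For finitely additive measures, smallness on blocks says nothing about unions. For example, pick $p_\alpha \in B_\alpha$ and a nonprincipal ultrafilter $\mathcal{U}$ on $\kappa$, and set $\nu(S) = 1$ if $\{\alpha : p_\alpha \in S\} \in \mathcal{U}$ and $\nu(S) = 0$ otherwise. This $\nu$ vanishes on every block, yet gives mass $1$ to $\{p_\alpha : \alpha \in I\}$ for every $I \in \mathcal{U}$. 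Your countable-case sketch ("only finitely many pieces carry mass $\geq \varepsilon/2$, then diagonalize") has the same defect.

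The lemma has a short proof that works uniformly for every infinite $\kappa = |\Gamma|$ and needs no set-mapping theorem. The idea is to apply the \emph{negation} of the conclusion to arbitrary sets of size $\kappa$, not to blocks.
\begin{itemize}
\item Let $\nu_\gamma = |\mu_\gamma|$ and $M = \sup_\gamma \nu_\gamma(\Gamma)$. Suppose every $Q \subseteq \Gamma$ with $|Q| = \kappa$ contains a bad point, i.e.\ some $\gamma \in Q$ with $\nu_\gamma(Q \setminus \{\gamma\}) \geq \varepsilon$.
\item Set $\Gamma_0 = \Gamma$. Given $\Gamma_{j-1}$ of size $\kappa$, partition it into $\kappa$ pieces of size $\kappa$ and choose a bad point in each piece. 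Let $\Gamma_j$ be the set of chosen points. Then $\Gamma_j \subseteq \Gamma_{j-1}$, $|\Gamma_j| = \kappa$, and $\Gamma_j$ meets each round-$j$ piece in exactly one point.
\item Fix $n > M/\varepsilon$ and any $\gamma \in \Gamma_n$. For each $j \leq n$, let $R_j$ be the round-$j$ piece whose chosen point is $\gamma$.
\item For $i < j$ we have $R_j \subseteq \Gamma_{j-1} \subseteq \Gamma_i$ and $\Gamma_i \cap R_i = \{\gamma\}$. Hence $R_i \cap R_j \subseteq \{\gamma\}$.
\item So the sets $R_j \setminus \{\gamma\}$, $1 \leq j \leq n$, are pairwise disjoint, and each has $\nu_\gamma$-measure at least $\varepsilon$. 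Finite additivity and positivity give $\nu_\gamma(\Gamma) \geq n\varepsilon > M$, a contradiction.
\end{itemize}
With this lemma applied at $\varepsilon = \delta/4$, your argument is complete.
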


In particular, if one wants to show that a certain Banach space $X$ is not injective, 
one possible strategy is to prove that there is some set $\Gamma$ such that 
$X$ contains a copy of $c_0(\Gamma)$ but $\Gamma$ is large enough so that 
$X$ \emph{cannot} contain a copy of $\ell_\infty(\Gamma)$. To achieve this, the 
following theorem, also due to Rosenthal, will be useful.

\begin{theorem}[{\cite[Theorem 3.4]{rosenthal}}] \label{thm: rosenthal_2}
  Suppose that $X$ is a Banach space, $\Gamma$ is an infinite set, 
  $\langle x_\alpha \mid \alpha \in \Gamma \rangle$ is a sequence of elements of 
  $X$, and $C > 0$ are such that
  \begin{enumerate}
    \item for all $\alpha \in \Gamma$, we have $\|x_\alpha\| = 1$;
    \item for all positive $n < \omega$, all real numbers 
    $\langle r_i \mid i < n \rangle$, and all pairwise distinct elements 
    $\langle \alpha_i \mid i < n \rangle$ from $\Gamma$, we have
    \[
      \left\| \sum_{i < n} r_i x_{\alpha_i} \right\| \leq C 
      \sup_{i < n} |r_i|.
    \]
  \end{enumerate}
  Then there is $\Gamma' \subseteq \Gamma$ such that $|\Gamma'| = |\Gamma|$ and 
  the closure of the span of $\{x_\alpha \mid \alpha \in \Gamma'\}$ in 
  $X$ is isomorphic to $c_0(\Gamma')$.
\end{theorem}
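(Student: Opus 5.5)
The plan is to get a lower $c_0$-estimate on a large subfamily by using norming functionals. Hypothesis (2) already gives the upper estimate $\|\sum_{i<n} r_i x_{\alpha_i}\|\le C\sup_i|r_i|$. So the linear map $c_{00}(\Gamma')\to X$ sending $e_\alpha$ to $x_\alpha$ is bounded. It will extend to an isomorphism of $c_0(\Gamma')$ onto $\overline{\mathrm{span}}\{x_\alpha \mid \alpha\in\Gamma'\}$ once we have a matching lower estimate $\|\sum r_i x_{\alpha_i}\|\ge \tfrac12\sup_i |r_i|$ for pairwise distinct $\alpha_i\in\Gamma'$.

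For each $\alpha\in\Gamma$, use Hahn--Banach to fix $f_\alpha\in X^*$ with $\|f_\alpha\|=1$ and $f_\alpha(x_\alpha)=1$.

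\textbf{Step 1 (summability).} For any $f\in X^*$ of norm $\le 1$ and any finite $F\subseteq\Gamma$, choose $r_\beta=\mathrm{sgn} f(x_\beta)$. Then (2) gives $\sum_{\beta\in F}|f(x_\beta)| = f(\sum_{\beta\in F} r_\beta x_\beta)\le C$. Hence $\sum_{\beta\in\Gamma}|f(x_\beta)|\le C$. Applied to $f_\alpha$, for each $\alpha$ there is a finite set $F(\alpha)\subseteq\Gamma$ with
\[
\sum_{\beta\notin F(\alpha)\cup\{\alpha\}}|f_\alpha(x_\beta)|\le \tfrac12 .
\]

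\textbf{Step 2 (free set).} Find $\Gamma'\subseteq\Gamma$ with $|\Gamma'|=|\Gamma|$ that is free for the set mapping $F$, i.e.\ $\beta\notin F(\alpha)$ for all distinct $\alpha,\beta\in\Gamma'$. When $\Gamma$ is uncountable, the values $F(\alpha)$ are finite, hence of size $<\aleph_0<|\Gamma|$, so Hajnal's free set theorem yields such a $\Gamma'$.

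\textbf{Step 3 (lower estimate).} Given distinct $\alpha_0,\dots,\alpha_{n-1}\in\Gamma'$ and reals $r_i$, pick $j$ with $|r_j|=\sup_i|r_i|$. Then
\[
\Big\|\sum_i r_i x_{\alpha_i}\Big\|\ge \Big|f_{\alpha_j}\Big(\sum_i r_ix_{\alpha_i}\Big)\Big| \ge |r_j|-|r_j|\sum_{i\ne j}|f_{\alpha_j}(x_{\alpha_i})|\ge \tfrac12|r_j| ,
\]
because freeness puts every $\alpha_i$ with $i\neq j$ outside $F(\alpha_j)$. Hypothesis (1) is used only so that the $f_\alpha$ normalize correctly.

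\textbf{Main obstacle: countable $\Gamma$.} Here the free set theorem fails, since a finite-valued set mapping on $\omega$ need not have an infinite free set. I would instead build $\alpha_0,\alpha_1,\dots$ recursively, using tolerances $2^{-n-2}$ in place of $\tfrac12$. One direction is easy: the new $\alpha_n$ must avoid the finitely many finite sets $F_m$ where $|f_{\alpha_m}(x_\beta)|\ge 2^{-n-2}$, for $m<n$. The other direction is the problem: we also need $|f_{\alpha_n}(x_{\alpha_m})|$ small for $m<n$. For that I would first pass, by a diagonal argument, to an infinite subset on which $f_\alpha(x_\beta)\to g(x_\beta)$ for every $\beta$, where $g$ is a weak$^*$ cluster point. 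By Step 1, $\sum_\beta|g(x_\beta)|\le C$, so the tail values $g(x_\beta)$ are small. After discarding an initial segment, later $f_\alpha$ are then nearly $g$, and hence nearly small, on the earlier $x_\beta$. This should give a sequence whose off-diagonal sums are $\le\tfrac12$, and Step 3 then applies verbatim.
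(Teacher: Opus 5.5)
The paper gives no proof of this statement. It is quoted from Rosenthal and used as a black box in Proposition~\ref{prop: c_0_copy}, so there is no internal argument to compare with, and your proposal has to stand on its own. It does: the proof is correct.

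\textbf{The uncountable case.} Step~1 is right, and Hajnal's free set theorem applies to finite-valued set mappings on any uncountable set, singular cardinalities included. Step~3 then gives $\tfrac12\sup_i|r_i|\le\|\sum_i r_ix_{\alpha_i}\|\le C\sup_i|r_i|$ on $\Gamma'$. So $e_\alpha\mapsto x_\alpha$ extends to an isomorphism of $c_0(\Gamma')$ onto the closed span.

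\textbf{The countable case.} Your sketch works, but two points should be made precise. First, what you need is that the tail \emph{sum} $\sum_{\beta\ge N}|g(x_\beta)|$ is small, say $<\tfrac18$, not merely the individual tail values. Second, when choosing $\alpha_n$ you should require $\sum_{m<n}|f_{\alpha_n}(x_{\alpha_m})-g(x_{\alpha_m})|<\tfrac18$. This holds for all but finitely many candidates along your diagonal subsequence. Combined with $|f_{\alpha_m}(x_{\alpha_n})|<2^{-n-2}$ for $m<n$, every off-diagonal sum is then $<\tfrac12$.

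There is also a shorter route here. Step~1 shows $(x_n)$ is weakly null, so by Bessaga--Pe\l czy\'nski it has a basic subsequence. For a normalized basic sequence with basis constant $K$ one has $|r_j|\le 2K\|\sum_i r_ix_{n_i}\|$, which is the lower estimate.

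\textbf{Comparison with the cited source.} Your reduction is to find $\Gamma'$ with $\mu_\alpha(\Gamma'\setminus\{\alpha\})\le\tfrac12$ for all $\alpha\in\Gamma'$, where $\mu_\alpha(B)=\sum_{\beta\in B}|f_\alpha(x_\beta)|$. This is exactly the kind of ``relatively disjoint'' subfamily produced by Rosenthal's lemma on uniformly bounded families of measures on $\mathcal{P}(\Gamma)$, which is the subject of the cited paper.

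You differ in how you obtain it. Your $\mu_\alpha$ come from vectors of $\ell_1(\Gamma)$ of norm at most $C$, so each is within $\tfrac12$ of a finitely supported measure. That lets Hajnal's theorem handle the uncountable case and a weak$^*$-compactness diagonalization handle the countable case. This is more economical for the statement at hand.

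Rosenthal's lemma, however, covers arbitrary finitely additive measures, where in general no finite set carries all but $\tfrac12$ of the mass. That generality is what results like Theorem~\ref{thm: rosenthal} require, and your set-mapping reduction does not reach it.
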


Now suppose that $K$ is a compact Hausdorff space and $D \subseteq K$ is dense. Then there is a closed 
embedding of $C(K)$ into $\ell_\infty(D)$ given by sending each $\varphi \in C(K)$ to 
$\langle \varphi(x) \mid x \in D \rangle$. When we write $\ell_\infty(D)/C(K)$, it is implicit that 
the quotient is with respect to this embedding.

\begin{proposition} \label{prop: c_0_copy}
  Suppose that $\lambda$ is an infinite cardinal, $K$ is a compact Hausdorff space, $D \subseteq K$ is dense, 
  and $\mc A$ is an almost disjoint family on $K$ of cardinality $\lambda$. Then 
  $\ell_\infty(D)/C(K)$ contains a copy of $c_0(\lambda)$. 
\end{proposition}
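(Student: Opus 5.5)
The natural plan is to take, for each $U \in \mc A$, the characteristic function $\chi_{U \cap D} \in \ell_\infty(D)$ and let $x_U$ be its image in $\ell_\infty(D)/C(K)$. We then verify the two hypotheses of Theorem~\ref{thm: rosenthal_2} (after normalizing) and conclude that a subfamily of size $\lambda$ spans a copy of $c_0(\lambda)$.

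\textbf{Upper bound.} Take distinct $U_0,\dots,U_{n-1} \in \mc A$ and reals $r_i$. The sets $U_i \cap U_j$ for $i \neq j$ are compact and open in the Hausdorff space $K$, hence clopen. The finite union $W = \bigcup_{i<j} (U_i \cap U_j)$ is therefore clopen, so $\chi_W$ and each $r_i\chi_{U_i \cap W}$ are continuous. It follows that $\sum_i r_i \chi_{U_i\cap D}$ agrees modulo $C(K)$ with
\[
\sum_i r_i \chi_{(U_i \setminus W)\cap D}.
\]
The sets $U_i \setminus W$ are pairwise disjoint, so this function has sup norm at most $\sup_i |r_i|$. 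This gives $\|\sum_i r_i x_{U_i}\| \le \sup_i |r_i|$, so condition (2) holds with $C=1$, provided the norms $\|x_U\|$ are bounded below.

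\textbf{Lower bound.} This is the step I expect to be the main obstacle. We must show $\|x_U\| \ge c$ for some uniform $c>0$, say $c = 1/2$. So suppose $\varphi \in C(K)$ satisfies $|\varphi - \chi_U| < 1/2$ on $D$. By density of $D$ and continuity of $\varphi$, we get $\varphi \ge 1/2$ on $\overline{U}$. Similarly, since $D \cap (K\setminus \overline U)$ is dense in the open set $K \setminus \overline U$, we get $\varphi \le 1/2$ on $\overline{K\setminus \overline U}$.

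Now consider the closed set $F = \{\varphi \ge 1/2\}$. Here is where regular openness is used. I expect to need the strict version: $\varphi > 1/2$ on the open set $\{\varphi>1/2\} \supseteq U$, so $\{\varphi > 1/2\} \cap \overline{K\setminus\overline U} = \emptyset$. Hence $\{\varphi>1/2\} \subseteq \operatorname{int}\overline U = U$, the last equality by regular openness. Then $U \subseteq \{\varphi > 1/2\} \subseteq U$, so $U = \{\varphi > 1/2\}$.

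To make this strict, I would use the margin: if $\|\chi_U - \varphi\|_D \le \delta < 1/2$, then $\varphi \ge 1-\delta > 1/2$ on $\overline U$ and $\varphi \le \delta < 1/2$ on $\overline{K\setminus\overline U}$. Since $\overline U \cup \overline{K \setminus \overline U} = K$, we get $U \subseteq \overline U = \{\varphi \ge 1-\delta\}$, which is closed. Moreover $\{\varphi > 1/2\}$ is disjoint from $\overline{K\setminus \overline U}$, hence contained in $\operatorname{int}\overline U = U$. Therefore $U = \overline U$ is closed in the compact space $K$, hence compact, contradicting Definition~\ref{ad_def}(1).

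So $1/2 \le \|x_U\| \le 1$ for every $U \in \mc A$.

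\textbf{Conclusion.} Normalize by setting $y_U = x_U/\|x_U\|$. Then
\[
\Bigl\|\sum_i r_i y_{U_i}\Bigr\| \le \sup_i \frac{|r_i|}{\|x_{U_i}\|} \le 2\sup_i |r_i|,
\]
by the upper-bound computation applied to the coefficients $r_i/\|x_{U_i}\|$. Theorem~\ref{thm: rosenthal_2} with $C=2$ then yields $\Gamma' \subseteq \mc A$ with $|\Gamma'| = \lambda$ whose closed span is isomorphic to $c_0(\lambda)$, as required.
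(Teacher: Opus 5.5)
Your proof is correct and is essentially the paper's argument. It uses the same vectors $[\chi_{U\cap D}]$ and the same lower bound $\|[\chi_{U\cap D}]\|\geq 1/2$ from regular openness plus non-compactness. Your upper bound subtracts the continuous function $\sum_i r_i\chi_{U_i\cap W}$, which is exactly the paper's $\varphi$ built on the atoms $U_B$, and you then apply Theorem~\ref{thm: rosenthal_2} with $C=2$.

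The differences are cosmetic: you run the lower bound globally, showing $\overline{U}\subseteq\{\varphi>1/2\}\subseteq U$, rather than at a single point of $\overline{U}\setminus U$. One step should be made explicit: $U_i\cap W=\bigcup_{j\neq i}(U_i\cap U_j)$ is clopen, and this does not follow merely from $W$ being clopen.
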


\begin{proof}
  Let $\langle U_\alpha \mid \alpha < \lambda \rangle$ be an injective enumeration of $\mc A$ 
  and, for each $\alpha < \lambda$, let $f_\alpha \in \ell_\infty(D)$ be the characteristic 
  function of $D \cap U_\alpha$. For an arbitrary $g \in \ell_\infty(D)$, let 
  $[g]$ denote the equivalence class of $g$ in $\ell_\infty(D)/C(K)$.
  
\begin{claim}
  For all $\alpha < \lambda$, we have $\| [f_\alpha] \| = 1/2$.
\end{claim}

\begin{proof}
  Fix $\alpha < \lambda$. Clearly $\| [f_\alpha] \| \leq 1/2$, 
  since the range of $f_\alpha - 1/2$ lies in the interval $[-1/2, 1/2]$. 
  Suppose for the sake of contradiction that $\| [f_\alpha] \| < 1/2$. Then there is $\varepsilon > 0$ and 
  $\varphi \in C(K)$ such that, for all $x \in D$, we have $f_\alpha(x) + \varphi(x) \in [\varepsilon, 1]$.
  Since $U_\alpha$ is not compact and hence not closed, we can find $x \in \cl(U_\alpha) \setminus U_\alpha$. Since 
  $U_\alpha$ is a regular open set, we have $x \notin \mathrm{int}(\cl(U_\alpha))$. In particular, 
  we have $x \in \cl(D \cap U_\alpha) \cap \cl(D \setminus U_\alpha)$. 
  
  Using the continuity of $\varphi$, fix an open neighborhood $U$ of $x$ such that 
  $|\varphi(y) - \varphi(x)| < \varepsilon/2$ for all $y \in U$. Fix $y_0 \in (U \cap D) \setminus 
  U_\alpha$ and $y_1 \in U \cap D \cap U_\alpha$. Since $f_\alpha(y_0) = 0$, we must have 
  $\varphi(y_0) \geq \varepsilon$. Since $f_\alpha(y_1) = 1$, we must have $\varphi(y_1) \leq 0$. 
  But then $|\varphi(y_0) - \varphi(y_1)| \geq \varepsilon$, contradicting our choice of $U$. 
\end{proof}

\begin{claim}
  Suppose that $A \subseteq \lambda$ is finite and $\langle r_\alpha \mid 
  \alpha \in A \rangle$ is a sequence of real numbers. Then
  \[
    \left\| \sum_{\alpha \in A} r_\alpha [f_\alpha] \right\| \leq 
    \sup_{\alpha \in A} |r_\alpha|.
  \]
\end{claim}

\begin{proof}
  By the previous claim, we can assume that $|A| > 1$.
  Let $[A]^{\geq 2}$ denote the collection of all subsets of $A$ of 
  cardinality at least $2$. For all $B \in [A]^{\geq 2}$, let
  \[
    U_B := \bigcap_{\beta \in B} U_\beta \cap \bigcap_{\alpha \in A \setminus B} K \setminus U_\alpha.
  \]
  Note that, if $B_0$ and $B_1$ are distinct elements of $[A]^{\geq 2}$, then $U_{B_0} \cap 
  U_{B_1} = \emptyset$.  
  
  \begin{subclaim}
    For all $B \in [A]^{\geq 2}$, $U_B$ is clopen.
  \end{subclaim}
  
  \begin{proof}
    Let $U_B^* := \bigcap_{\beta \in B} U_\beta$. By the definition of an almost disjoint family on $K$, 
    we know that $U_B^*$ is clopen and, for all $\alpha \in A \setminus B$, $U_B^* \cap U_\alpha$ 
    is clopen. Note that $U_B$ can be rewritten as 
    \[
      U_B^* \setminus \left (\bigcup_{\alpha \in A \setminus B} U_B^* \cap U_\alpha \right ).
    \]  
    The subclaim follows immediately.
  \end{proof}
  Let $f_A := \sum_{\alpha \in A} r_\alpha f_{\alpha}$, and let 
  $r^* = \sup_{\alpha \in A} |r_\alpha|$. We must show that 
  $\|[f_A]\| \leq r^*$. To this end, define a function 
  $\varphi \in C(K)$ as follows. Fix $x \in K$. If there is 
  $B \in [A]^{\geq 2}$ such that $x \in U_B$, note that such a $B$ is unique, 
  and set $\varphi(x) := \sum_{\beta \in B} r_\beta$. If there is no such 
  $B$, then set $\varphi(x) := 0$. The fact that $\varphi$ is continuous 
  follows from the fact that $U_B$ is clopen for all $B \in [A]^{\geq 2}$. 
  We show that the range of $f_A - \varphi$ lies in the interval 
  $[-r^*,r^*]$, which will establish the claim. To this end, fix $x \in K$. 
  If there is $B \in [A]^{\geq 2}$ 
  such that $x \in U_B$, then we have $f_A(x) = \sum_{\beta \in B} r_\beta 
  = \varphi(x)$, so $f_A(x) - \varphi(x) = 0$. If there is a unique 
  $\alpha \in A$ such that $x \in U_\alpha$, then we have 
  $f_A(x) = r_\alpha$ and $\varphi(x) = 0$. Finally, in all other cases 
  we have $f_A(x) = \varphi(x) = 0$. 
\end{proof}

Now Theorem \ref{thm: rosenthal_2}, applied to 
$\langle 2[f_\alpha] \mid \alpha < \lambda \rangle$ and $C = 2$, yields the existence 
of a $\Gamma' \subseteq \lambda$ such that $|\Gamma'| = \lambda$ and 
$\langle f_\alpha \mid \alpha \in \Gamma' \rangle$ generates a copy of 
$c_0(\Gamma')$ in $\ell_\infty(D)/C(K)$.
\end{proof}

We are now ready to prove Theorem B:

\begin{proof}[Proof of Theorem B]
  Suppose that $K$ is a compact Hausdorff space, $D \subseteq K$ is dense, 
  and there is an almost disjoint family on $K$ of cardinality $\lambda$ such 
  that $2^\lambda > 2^{|D|}$. By Proposition \ref{prop: c_0_copy}, 
  $\ell_\infty(D)/C(K)$ contains a copy of $c_0(\lambda)$. If 
  $\ell_\infty(D)/C(K)$ were injective, then Theorem \ref{thm: rosenthal} 
  would imply that $\ell_\infty(D)/C(K)$ contains a copy of $\ell_\infty(\lambda)$. 
  But we have
  \[
    |\ell_\infty(\lambda)| = 2^\lambda > 2^{|D|} = |\ell_\infty(D)/C(K)|,
  \]
  yielding the desired contradiction.
\end{proof}

\section{An almost disjoint family on $\mathbb{N}^*$} \label{section: n-star}

In this section, we will prove Theorem C, stating that $\mathfrak{b} = 2^{\aleph_0}$ 
implies the existence of an almost disjoint family on $\mathbb{N}^*$ of 
cardinality $2^{\aleph_1}$. Recall that $\mathfrak{b}$ denotes the 
\emph{bounding number}, i.e., the least cardinality of a subset 
of ${^{\omega}}\omega$ that is unbounded in the order 
$({^{\omega}}\omega, <^*)$, where $<^*$ denotes eventual domination. 
Together with Theorem B, this
will then yield Theorem A. We will establish Theorem C via a more technical 
combinatorial result, stated below. Given distinct $\sigma, \tau \in {^{\leq \omega_1}}\omega$, we let $\sigma \wedge \tau$ denote the longest 
common initial segment of $\sigma$ and $\tau$. More precisely,
\[
  \sigma \wedge \tau = \bigcup\{\sigma \restriction \eta + 1 \mid \eta \in \dom(\sigma) 
  \cap \dom(\tau) \text{ and } \sigma \restriction \eta+1 = \tau \restriction \eta+1\}.
\]

\begin{theorem} \label{thm: labeled_tree}
  Suppose that $\mathfrak{b} = 2^{\aleph_0}$. Then there is a sequence 
  $\langle x_\sigma \mid \sigma \in 
  {^{<\omega_1}}\omega \rangle$ of elements of $[\omega]^\omega$ such that
  \begin{enumerate}
    \item for all $\sigma \subsetneq \tau \in {^{<\omega_1}}\omega$, we have 
    $x_\sigma \subsetneq^* x_\tau$;
    \item for all distinct $\sigma, \tau \in {^{<\omega_1}}\omega$, we have 
    $x_\sigma \cap x_\tau =^* x_{\sigma \wedge \tau}$;
    \item for every $y \in [\omega]^\omega$ and every $f \in {^{\omega_1}}\omega$, one of the following holds:
    \begin{enumerate}
      \item there is $\eta < \omega_1$ such that
      \[
        y \subseteq^* x_{f \restriction \eta};
      \]
      \item there is an infinite $y' \subseteq y$ such that, for 
      every $\eta < \omega_1$, we have $y' \cap x_{f \restriction \eta} =^* \emptyset$.
    \end{enumerate}
  \end{enumerate}
\end{theorem}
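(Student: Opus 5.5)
We construct the sequence $\langle x_\sigma \mid \sigma \in {^{<\omega_1}}\omega \rangle$ by recursion of length $2^{\aleph_0} = \mathfrak{b}$. Since the tree ${^{<\omega_1}}\omega$ has size $2^{\aleph_0}$ under $\mathfrak{b} = 2^{\aleph_0}$ (indeed $\aleph_1 \leq \mathfrak{b}$ and ${^{<\omega_1}}\omega$ has size $2^{\aleph_0}$), we fix an enumeration in order type $2^{\aleph_0}$ that respects $\subsetneq$. We also fix an enumeration $\langle (y_\xi, f_\xi) \mid \xi < 2^{\aleph_0}\rangle$ of all pairs relevant to clause (3). There are $2^{\aleph_1}$ many $f$, which is too many, so instead of $f$ we bookkeep the countable objects that matter: pairs $(y, g)$ with $g \in {^{\delta}}\omega$, $\delta < \omega_1$ limit (or $\delta=\omega_1$ handled via its restrictions). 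Clause (3) will be secured branch by branch at limit stages.

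\textbf{Successor and limit steps.} At successor nodes $\sigma^\frown n$, we choose $x_{\sigma^\frown n} \supsetneq^* x_\sigma$ as $x_\sigma \cup z_n$, where $\langle z_n \rangle$ are pairwise disjoint infinite sets almost disjoint from $x_\sigma$ and from everything relevant. This gives (2) for siblings, and inductively for all incomparable pairs: if $\sigma\wedge\tau=\rho$, then $x_\sigma \cap x_\tau =^* x_\rho$ provided the "new parts" added above distinct successors of $\rho$ remain almost disjoint. To keep this, we maintain the invariant that $x_\tau \setminus x_\rho$ for $\tau \supseteq \rho^\frown n$ lies (mod finite) inside a set $w_{\rho^\frown n}$, where the $w_{\rho^\frown n}$, $n<\omega$, are pairwise disjoint and disjoint from $x_\rho$. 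At a limit node $\sigma$ of length $\delta$, the sequence $\langle x_{\sigma\restriction\eta} \mid \eta<\delta\rangle$ is a countable $\subsetneq^*$-increasing chain. We need $x_\sigma$ to be a $\subseteq^*$-upper bound that is "tight," and we need a fresh co-infinite room $w$ for the future. The choice of $x_\sigma$ must also respect the $w$-constraints inherited from all initial segments, which is a countable set of requirements, and so it can be met by a diagonal union $\bigcup_k (x_{\sigma\restriction\eta_k}\setminus k\text{-th finite error})$.

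\textbf{Clause (3).} This is the main obstacle, and it is where $\mathfrak{b}$ is used. Given $y$ and a branch $f$, suppose (3b) fails, so every infinite $y'\subseteq y$ meets some $x_{f\restriction\eta}$ infinitely. We must arrange that then $y \subseteq^* x_{f\restriction\eta}$ for some $\eta$. The danger arises if the chain along $f$ "captures" $y$ only in the limit, i.e., $y \cap x_{f\restriction\eta}$ grows without $y$ ever being almost contained. Our plan is to handle each candidate $y$ at the limit stages $\delta$ at which $y$ has been enumerated. When defining $x_{f\restriction\delta}$, if $y\cap x_{f\restriction\eta}$ is infinite for some $\eta<\delta$ and $y \subseteq^* \bigcup$-limit in the sense that $y\setminus x_{f\restriction\eta}$ is covered by the chain, we put $y$ into $x_{f\restriction\delta}$. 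To do this while keeping the upper bound almost-disjoint from the sibling rooms, we use a dominating-type argument. Code the finite errors of the fewer than $\mathfrak{b}$ many sets already constructed (those with index earlier than the current stage) as functions in ${^\omega}\omega$, and take a single function $h$ that $<^*$-bounds them all. The set $\bigcup_k (x_{\sigma\restriction\eta_k}\setminus h(k))$ is then almost contained in no earlier forbidden set and almost contains every required $y$. If $y$ is not eventually absorbed along $f$, we extract $y'$ as a diagonal pseudo-complement: choose $y'\subseteq y$ avoiding $x_{f\restriction\eta_k}$ above $h(k)$. This works because each $y\setminus x_{f\restriction\eta}$ is infinite and the countable chain admits a pseudo-intersection of complements within $y$. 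The subtle point, which we expect to require the most care, is the interplay between the countable cofinality along a branch of length $\omega_1$ and the $<\mathfrak{b}$ bookkeeping. Since along $f$ only countably many $\eta$ precede any stage at which $y$ appears, we reduce to the case where $\eta$ ranges over a club in $\omega_1$, and then use a pressing-down/closure argument to find a stage $\delta$ at which $y$ was handled.
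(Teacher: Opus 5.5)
Your treatment of clause (3) has a genuine gap. At a countable limit $\delta$ there is nothing to secure. If $y\not\subseteq^* x_{f\restriction\eta}$ for all $\eta<\delta$, a diagonal argument along a cofinal $\omega$-sequence in $\delta$ already gives an infinite $y'\subseteq y$ almost disjoint from every $x_{f\restriction\eta}$, $\eta<\delta$. The real danger is at height $\omega_1$, which is not a node of the tree. There the sequence $\langle y\setminus x_{f\restriction\eta}\mid\eta<\omega_1\rangle$ has no countable cofinal subsequence, so your step ``choose $y'\subseteq y$ avoiding $x_{f\restriction\eta_k}$ above $h(k)$'' is unavailable. Worse, this decreasing sequence may be a tower with no infinite pseudo-intersection: towers of height $\omega_1$ exist under $\mathsf{CH}$, and nothing in your construction excludes this. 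So (3) for a branch is not decided by its countable restrictions, and the closing club/pressing-down sentence does not supply an argument.

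The absorption step is also unsound. Enlarging the bounding function $h$ \emph{shrinks} $\bigcup_k(x_{\sigma\restriction\eta_k}\setminus h(k))$, so it cannot make that set almost contain $y$. And putting $y$ into $x_{f\restriction\delta}$ violates (2) as soon as $y$ meets some earlier $x_\tau$ (with $\tau$ incomparable to $f\restriction\delta$) in infinitely many points outside $x_{\tau\wedge f}$. Separately, you assert the fresh ``rooms'' almost disjoint from fewer than $2^{\aleph_0}$ earlier sets but never produce them. The paper gets them from a $\subsetneq^*$-increasing chain $\langle a_\eta\mid\eta<2^{\aleph_0}\rangle$, which exists since $\mathfrak{b}=2^{\aleph_0}$, keeping every set built before stage $\beta$ almost inside $a_{\eta^*}$.

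The missing idea is to stop handling branches one at a time. Instead, handle each $y$ exactly once, at a stage $\beta$ when the partial tree $\Sigma^*$ has size $<2^{\aleph_0}$, and let coherence (2) transfer the work to all $2^{\aleph_1}$ branches. In the first case, $y$ is already decided by $\Sigma^*$: either $y\subseteq^* x_\sigma$ for some $\sigma\in\Sigma^*$, or $y\cap x_\sigma$ and $y\cap x_{\sigma'}$ are $\subseteq^*$-incomparable for some $\sigma,\sigma'\in\Sigma^*$, or some infinite subset of $y$ is almost disjoint from every $a_\eta$. Then (3) follows for every $f$; e.g.\ if $\sigma$ is $\subseteq$-minimal with $y\subseteq^* x_\sigma$ and $\sigma\not\subseteq f$, then $y\setminus x_{\sigma\wedge f}$ works. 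Otherwise the traces $y\cap x_\sigma$ form a $\subseteq^*$-chain with a maximum or a countable cofinal subsequence (automatic when $\Sigma^*$ is countable, as under $\mathsf{CH}$). This yields an infinite $y'\subseteq y$, lying inside some $a_\eta$, almost disjoint from every $x_\sigma$ with $\sigma\in\Sigma^*$.

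Rather than absorbing $y$, the paper then \emph{splits} it:
\begin{enumerate}
\item take the next node $\sigma_\xi$ of the enumeration, which need not be related to $y$;
\item partition $y'$ into infinite pieces $y'_\ell$;
\item build $x_{\sigma_\xi}$ almost disjoint from the pieces $y'_{\ell+1}$;
\item set $x_{\sigma_\xi{}^\frown\langle\ell\rangle}=x_{\sigma_\xi}\cup y'_{\ell+1}$ for all $\ell$.
\end{enumerate}
Every $f\in{^{\omega_1}}\omega$ fails to extend $\sigma_\xi{}^\frown\langle\ell\rangle$ for some $\ell$. By (2), every $x_{f\restriction\eta}$ meets $x_{\sigma_\xi{}^\frown\langle\ell\rangle}$ only inside $x_{\sigma_\xi}$ modulo finite, so $y'_{\ell+1}$ witnesses (3)(b), and the tower issue never arises. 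The use of $\mathfrak{b}$ you identified, namely one $h^*$ dominating the fewer than $2^{\aleph_0}$ coherence-error functions so that a diagonal union defines a limit node, is exactly how the paper preserves (2) at limit nodes. It plays no role in securing (3).
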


We first show how Theorem \ref{thm: labeled_tree} yields Theorem C.

\begin{proof}[Proof of Theorem C]
  Let $\langle x_\sigma \mid \sigma \in {^{<\omega_1}}\omega \rangle$ be as in 
  Theorem \ref{thm: labeled_tree}. Recall that $\bb{N}^*$ can be seen as the space 
  of all nonprincipal ultrafilters over $\omega$, and a clopen base for $\bb{N}^*$ is 
  given by the sets
  \[
    N_y := \{D \in \bb{N}^* \mid y \in D\}
  \]
  for $y \in [\omega]^\omega$.
  
  For each $f \in {^{\omega_1}}\omega$, let 
  \[
    U_f := \{D \in \bb{N}^* \mid \exists \eta < \omega_1 \ [x_{f \restriction \eta} \in 
    D]\}.
  \]
  We claim that $\{U_f \mid f \in {^{\omega_1}}\omega\}$ is an almost disjoint family 
  on $\bb{N}^*$. We first verify condition (2) in Definition \ref{ad_def}. Fix distinct 
  $f,g \in {^{\omega_1}}\omega$, and let $\eta < \omega_1$ be least such that $f(\eta) \neq 
  g(\eta)$. Then, by construction, we have $U_f \cap U_g = N_{f \restriction \eta}$, 
  which is clopen in $\bb{N}^*$ and hence compact.
  
  We now turn to verifying condition (1) in Definition \ref{ad_def}. Fix 
  $f \in {^{\omega_1}}\omega$. Since $U_f = \bigcup \{N_{x_{f \restriction \eta}} 
  \mid \eta < \omega_1\}$ is a union of open sets, $U_f$ is open itself. 
  To verify that $U_f$ is a \emph{regular} open set, we must show that no element 
  of $\bb{N}^* \setminus U_f$ is in the interior of the closure of $U_f$. In other words, 
  it suffices to show that, 
  for every $D \in \bb{N}^* \setminus U_f$ and every open neighborhood $W$ of $D$, 
  we can find an open set $W' \subseteq W$ disjoint from $U_f$. Fix such a 
  $D$ and $W$. By shrinking $W$ if necessary, we can assume that $W = N_y$ for 
  some set $y \in [\omega]^\omega$. 
  
  Now consider condition (3) in the statement of 
  Theorem \ref{thm: labeled_tree} applied to $y$ and $f$. If condition (3)(a) 
  is satisfied, then there is $\eta < \omega_1$ such that $y \subseteq^* 
  x_{f \restriction \eta}$. But then it would follow that $x_{f \restriction 
  \eta} \in D$, and hence $D \in U_f$.
  
  Hence, condition (3)(b) is satisfied, so we can find an infinite 
  $y_0 \subseteq y$ such that, for every $\eta < \omega_1$, we have 
  $y_0 \cap x_{f \restriction \eta} =^* \emptyset$. But then 
  $W' := N_{y_0} \subseteq W$ and $W' \cap U_f = \emptyset$, as desired.
  
  Finally, to verify that $U_f$ is not compact, note $U_f = 
  \bigcup \{N_{f\restriction \eta} \mid \eta < \omega_1\}$ and 
  $\langle N_{f\restriction \eta} \mid \eta < \omega_1 \rangle$ is a strictly 
  $\subsetneq$-increasing sequence of open sets. It follows that $U_f$ is not 
  even Lindel\"{o}f, let alone compact.
\end{proof}

\begin{proof}[Proof of Theorem \ref{thm: labeled_tree}]
  Using the fact that $\mathfrak{b} = 2^{\aleph_0}$, we begin by fixing the following data:
  \begin{itemize}
    \item a $\subsetneq^*$-increasing sequence $\langle a_\eta \mid \eta < 2^{\aleph_0} \rangle$ 
    of elements of $[\omega]^\omega$;
    \item an enumeration $\langle \sigma_\xi \mid \xi < 2^{\aleph_0} \rangle$ of 
    ${^{<\omega_1}}\omega$ 
    such that, for all $\sigma \subsetneq \tau$ in ${^{<\omega_1}}\omega$, $\sigma$ is 
    enumerated before $\tau$;
    \item an enumeration $\langle y_\zeta \mid \zeta < 2^{\aleph_0} \rangle$ of $[\omega]^\omega$.
  \end{itemize}
  We will construct a sequence $\vec{x} = \langle x_\sigma \mid \sigma \in {^{<\omega_1}}\omega 
  \rangle$ as in the statement of the theorem. More precisely, we will recursively construct the following 
  objects:
  \begin{itemize}
    \item a $\subseteq$-increasing sequence $\langle \Sigma_\alpha \mid \alpha < 2^{\aleph_0} 
    \rangle$ of $\subseteq$-downward-closed subsets of ${^{<\omega_1}}\omega$ such that, 
    for all $\alpha < 2^{\aleph_0}$, we have $|\Sigma_\alpha| < 2^{\aleph_0}$ and 
    $\sigma_\alpha \in \Sigma_\alpha$;
    \item an increasing sequence $\langle \eta_\alpha \mid \alpha < 2^{\aleph_0} \rangle$ of 
    ordinals less than $2^{\aleph_0}$; and
    \item sequences $\langle \vec{x}^\alpha \mid \alpha < 2^{\aleph_0} \rangle$ such that, 
    for all $\alpha < \beta < 2^{\aleph_0}$:
    \begin{itemize}
      \item $\vec{x}^\alpha = \langle x_\sigma \mid \sigma \in \Sigma_\alpha \rangle$;
      \item for all $\sigma \in \Sigma_\alpha$, $x_\sigma \subseteq^* a_{\eta_\alpha}$;
      \item $\vec{x}^\alpha$ satisfies requirements (1) and (2) in the statement of the 
      theorem;
      \item $\vec{x}^\alpha = \vec{x}^\beta \restriction \Sigma_\alpha$.
    \end{itemize}
  \end{itemize}
  Fix $\beta < 2^{\aleph_0}$ and suppose that we have constructed $\langle (\Sigma_\alpha, 
  \eta_\alpha, \vec{x}^\alpha) \mid \alpha < \beta \rangle$. Let 
  $\Sigma^* = \bigcup \{\Sigma_\alpha \mid \alpha < \beta\}$, $\eta^* = \sup\{\eta_\alpha 
  \mid \alpha < \beta\}$, and $\vec{x}^* = \bigcup \{\vec{x}^\alpha \mid \alpha < \beta\}$ 
  (in particular, all three are $\emptyset$ if $\beta = 0$). Note that, since 
  $2^{\aleph_0} = \mathfrak{b}$, it follows that $2^{\aleph_0}$ is regular, and hence 
  we have $|\Sigma^*|, \eta^* < 2^{\aleph_0}$.
  
  Consider the set $y_\beta \in [\omega]^\omega$. Our construction now splits into 
  the following two cases:
  \begin{itemize}
    \item \textbf{Case 1.} One of the following three situations holds:
    \begin{itemize}
      \item \textbf{Case 1a.} There is $\sigma \in \Sigma^*$ such that 
      $y_\beta \subseteq^* x_\sigma$.
      \item \textbf{Case 1b.} There are $\sigma, \sigma' \in \Sigma^*$ such that 
      \begin{itemize}
        \item $y_\beta \cap x_\sigma \not\subseteq^* x_{\sigma'}$; and
        \item $y_\beta \cap x_{\sigma'} \not\subseteq^* x_\sigma$.
      \end{itemize}
      \item \textbf{Case 1c.} There is an infinite set $y'_\beta \subseteq y_\beta$ such 
      that, for all $\eta < 2^{\aleph_0}$, we have $y'_\beta \cap a_\eta =^* \emptyset$.
    \end{itemize}     
    \item \textbf{Case 2.} Case 1 does not hold.
  \end{itemize}
  Suppose first that we are in case 1. Let $\xi < 2^{\aleph_0}$ be least such that $\sigma_\xi 
  \notin \Sigma^*$, and let $\Sigma_\beta = \Sigma^* \cup \{\sigma_\xi\}$. Note that 
  $\Sigma_\beta$ remains $\subseteq$-downward-closed due to the minimality of $\xi$. We must 
  specify $\eta_\beta$ and $x_{\sigma_\xi}$.
  
  Suppose first that $\dom(\sigma_\xi)$ is a successor ordinal, say $\dom(\sigma_\xi) = 
  \gamma + 1$. Set $\eta_\beta = \eta^* + 1$. Then set 
  \[
    x_{\sigma_\xi} = x_{\sigma_\xi \restriction \gamma} \cup \{a_{\eta_\beta} \setminus a_{\eta^*}\}.
  \]
  For all $\sigma \in \Sigma^*$, we have $\sigma \wedge \sigma_\xi = \sigma \wedge (\sigma_\xi 
  \restriction \gamma)$ and, since $x_\sigma \subseteq^* a_{\eta^*}$, we have 
  \[
    x_\sigma \cap x_{\sigma_\xi} =^* x_\sigma \cap x_{\sigma_\xi \restriction \gamma} =^* 
    x_{\sigma \wedge (\sigma_\xi \restriction \gamma)} = x_{\sigma \wedge \sigma_\xi}.
  \]
  We have thus satisfied all of the necessary requirements of the construction.
  
  Suppose next that $\dom(\sigma_\xi) = \gamma$ is a limit ordinal. Set $\eta_\beta = \eta^*$. 
  Let $\langle \gamma_n \mid 
  n < \omega \rangle$ be a strictly increasing sequence of ordinals converging to $\gamma$.
  For each $\sigma \in \Sigma^*$, let $h_\sigma \in {^{\omega}}\omega$ be an increasing function 
  such that, for all $n < \omega$, we have
  \[
    x_\sigma \cap x_{\sigma_\xi \restriction \gamma_n} \cap [h_\sigma(n), \omega) = 
    x_{\sigma \wedge (\sigma_\xi \restriction \gamma_n)} \cap [h_\sigma(n),\omega).
  \]
  Also fix an increasing function $h \in {^{\omega}}\omega$ such that, for all 
  $n < \omega$, we have
  \[
    x_{\sigma_\xi \restriction \gamma_n} \cap [h(n),\omega) \subseteq a_{\eta^*}.
  \]
  Since $\mathfrak{b} = 2^{\aleph_0}$, we can find a single function $h^* \in {^{\omega}}\omega$ 
  such that $h < h^*$ and $h_\sigma <^* h^*$ for all $\sigma \in \Sigma^*$. Now set
  \[
    x_{\sigma_\xi} = \bigcup\{x_{\sigma_\xi \restriction \gamma_n} \cap 
    [h^*(n),\omega) \mid n < \omega\}.
  \]
  It is clear from the construction that we continue to satisfy requirement (1) in 
  the statement of the theorem, and the fact that $h <^* h^*$ implies that 
  $x_{\sigma_\xi} \subseteq^* a_{\eta^*}$.
  
  Let us now verify that we continue to satisfy requirement (2) in the statement of the 
  theorem. To this end, fix $\sigma \in \Sigma^*$, and fix $n_\sigma < \omega$ such 
  that 
  \begin{itemize}
    \item $\sigma \wedge \sigma_\xi = \sigma \wedge (\sigma_\xi \restriction \gamma_{n_\sigma})$; and
    \item for all $n \in [n_\sigma,\omega)$, we have $h_\sigma(n) < h^*(n)$.
  \end{itemize}
  We must show that
  \[
    x_\sigma \cap x_{\sigma_\xi} =^* x_{\sigma \wedge \sigma_\xi} = 
    x_{\sigma \wedge (\sigma_\xi \restriction \gamma_{n_\sigma})}.
  \]
  The inclusion $x_\sigma \cap x_{\sigma_\xi} \prescript{*}{}\supseteq \
  x_{\sigma \wedge \sigma_\xi}$ is immediate from the construction. For the other 
  direction, note that, for all $n \in [n_\sigma,\omega)$, we have
  \[
    x_\sigma \cap x_{\sigma_\xi \restriction \gamma_n} \cap [h^*(n),\omega) = 
    x_{\sigma \wedge (\sigma_\xi \restriction \gamma_n)} \cap [h^*(n),\omega).
  \]  
  Therefore, any elements of $x_\sigma \cap x_{\sigma_\xi}$ that are \emph{not} 
  in $x_{\sigma \cap \sigma_\xi}$ must come from $x_{\sigma_\xi \restriction \gamma_n}$ 
  for some $n < n_\sigma$. But, for any such $n$, we have
  \[
    x_\sigma \cap x_{\sigma_\xi \restriction \gamma_n} \subseteq^* 
    x_{\sigma \wedge (\sigma_\xi \restriction \gamma_n)} \subseteq^* x_{\sigma 
    \wedge \sigma_\xi},
  \]
  so there are only finitely many such elements. This completes case 1 of step $\beta$.
  
  Suppose now that we are in case 2. There are two subcases to consider:
  \begin{itemize}
    \item \textbf{Case 2a.} There is $\sigma^* \in \Sigma^*$ such that 
    $(y_\beta \cap x_\sigma) \subseteq^* (y_\beta \cap x_{\sigma^*})$ for all 
    $\sigma \in \Sigma^*$. If this holds, then, since we are not in case 1a, 
    the set $y_\beta \setminus x_{\sigma^*}$ must be infinite.
    \item \textbf{Case 2b.} Otherwise, using the fact that we are in neither case 1b 
    nor 2a, we can find a $\subseteq$-increasing sequence $\langle \sigma_n 
    \mid n < \omega \rangle$ from $\Sigma^*$ such that 
    \begin{itemize}
      \item $\langle y_\beta \cap x_{\sigma_n} \mid n < \omega \rangle$ is 
      $\subsetneq^*$-increasing;
      \item for all $\sigma \in \Sigma^*$, there is $n < \omega$ such that 
      $y_\beta \cap x_\sigma \subseteq^* x_{\sigma_n}$.
    \end{itemize}
  \end{itemize}
  In either subcase, we can find an infinite $y'_\beta \subseteq y_\beta$ such 
  that $y'_\beta \cap x_\sigma =^* \emptyset$ for all $\sigma \in \Sigma^*$. Since we 
  are not in case 1c, we can find $\eta_\beta > \eta^*$ such that 
  $|y'_\beta \cap a_{\eta_\beta}| = \aleph_0$. By thinning out $y'_\beta$ if necessary, we 
  can assume that $y'_\beta \subseteq a_{\eta_\beta}$.
  
  Let $\xi < 2^{\aleph_0}$ be least such that $\sigma_\xi \notin \Sigma^*$, and let 
  \[
    \Sigma_\beta = \Sigma^* \cup \{\sigma_\xi\} \cup \{\sigma_\xi{}^\frown \langle \ell \rangle 
    \mid \ell < \omega \}.
  \]
  Let $y'_\beta = \bigcup \{y'_{\beta,\ell} \mid \ell < \omega\}$ be a partition of $y'_\beta$ 
  into infinite, pairwise disjoint sets.
  
  We must specify $x_{\sigma_\xi}$ and $x_{\sigma_\xi{}^\frown \langle \ell \rangle}$ for all 
  $\ell < \omega$. Suppose first that $\dom(\sigma_\xi)$ is a successor ordinal, say 
  $\dom(\sigma_\xi) = \gamma + 1$. Then set
  \[
    x_{\sigma_\xi} = x_{\sigma_\xi \restriction \gamma} \cup y'_{\beta,0}
  \]
  and, for $\ell < \omega$, set
  \[
    x_{\sigma_\xi{}^\frown \langle \ell \rangle} = x_{\sigma_\xi} \cup y'_{\beta, \ell+1}.
  \]
  It is clear that this satisfies all of the requirements of the construction.
  
  Suppose next that $\dom(\sigma_\xi) = \gamma$ is a limit ordinal. Let $\langle \gamma_n \mid 
  n < \omega \rangle$ be a strictly increasing sequence of ordinals converging to $\gamma$. 
  For $\sigma \in \Sigma^*$, define $h_\sigma \in {^{\omega}}\omega$ exactly as in case 1. 
  Let $h \in {^{\omega}}\omega$ be an increasing function such that, for all $n < \omega$, 
  we have 
  \begin{itemize}
    \item $x_{\sigma_\xi \restriction \gamma_n} \cap [h(n),\omega) \subseteq a_{\eta^*}$; and
    \item $x_{\sigma_\xi \restriction \gamma_n} \cap [h(n),\omega) \cap y'_\beta = \emptyset$.
  \end{itemize}    
  As in case 1, find $h^* \in {^{\omega}}\omega$ such that $h < h^*$ and $h_\sigma <^* h^*$ 
  for all $\sigma \in \Sigma^*$, and set 
  \[
    x_{\sigma_\xi} = \bigcup \{x_{\sigma_\xi \restriction \gamma_n} \cap 
    [h^*(n),\omega) \mid n < \omega\}.
  \]
  Then, for all $\ell < \omega$, set
  \[
    x_{\sigma_\xi{}^\frown \langle \ell \rangle} = x_{\sigma_\xi} \cup y'_{\beta,\ell+1}.
  \]
  The verification that we continue to satisfy the recursion requirements is exactly as in 
  case 1, so we leave it to the reader.
  
  This completes the description of the construction of $\vec{x}$. It remains to verify that 
  the completed sequence satisfies item (3) in the statement of the theorem. To this end, 
  fix a $y \in [\omega]^\omega$ and $f \in {^{\omega_1}}\omega$.
  Fix $\beta < \omega_1$ such that $y = y_\beta$.
  
  Consider stage $\beta$ of the construction, and let $\Sigma^*$ and $\eta^*$ be 
  as defined during that stage. Suppose first that we were in case 1a 
  during stage $\beta$. Let $\sigma \in \Sigma^*$ be $\subseteq$-minimal with 
  $y_\beta \subseteq^* x_\sigma$. If $\sigma \subseteq f$, then (3)(a) in the 
  statement of the theorem is satisfied for this choice of $y$ and $f$. 
  Otherwise, $y' := y_\beta \setminus x_{\sigma \wedge f}$ witnesses this instance 
  of (3)(b).
  
  Suppose next that we were in case 1b, as witnessed by $\sigma, \sigma' \in \Sigma^*$. 
  Note that $\sigma$ and $\sigma'$ must be $\subseteq$-incomparable, so we can assume 
  without loss of generality that $\sigma \not\subseteq f$. Then 
  $y' := (y_\beta \cap x_\sigma) \setminus x_{\sigma \wedge f}$ witnesses this instance 
  of (3)(b).
  
  If we were in case 1c, then the $y'_\beta$ witnessing case 1c will also witness this 
  instance of (3)(b).
  
  Finally, suppose that we were in case 2 during stage $\beta$. Let $\xi < 2^{\aleph_0}$ 
  be least such that $\sigma_\xi \notin \Sigma^*$, and find $\ell < \omega$ such 
  that $\sigma_\xi{}^\frown \langle \ell \rangle \not\subseteq f$. Then 
  $y'_{\beta,\ell+1}$ witnesses this instance of (3)(b), completing the proof of the 
  theorem.
\end{proof}

We can finally establish Theorem A.

\begin{proof}[Proof of Theorem A]
  Suppose that $\mathsf{CH}$ holds. In particular, we have $\mathfrak{b} = 2^{\aleph_0}$. 
  By Theorem C, there is an almost disjoint 
  family on $\mathbb{N}^*$ of cardinality $2^{\aleph_1}$. Since the density 
  of $\mathbb{N}^*$ is $2^{\aleph_0} = \aleph_1$, and since 
  $2^{\aleph_1} < 2^{2^{\aleph_1}}$, Theorem B then implies that 
  $\ell^\infty([\omega]^\omega)/C(\mathbb{N}^*)$ is not injective. 
  Recalling the commutative diagram in the Introduction right above the statement 
  of Theorem A, we know that $\ell_\infty([\omega]^\omega)/C(\mathbb{N}^*)$ 
  appears as $c\kappa^2(c_0)$ in a commutative diagram yielding an injective 
  resolution of $c_0$. It then follows from the definition of injective 
  dimension that $\mathrm{id}(c_0) \geq 3$.
\end{proof}

\section{Open questions} \label{section: questions}

We end by (re)stating a few open questions. Of course, the question that 
is of most interest to us here remains the motivating question of this article.

\begin{question} \label{question: id}
  What is $\mathrm{id}(c_0)$? Is it provable in $\ZFC$ that $\mathrm{id}(c_0) = 
  \infty$?
\end{question}

For a Banach space $X$, the assertion that $\mathrm{id}(X) \leq n$ is equivalent 
to the assertion that the group $\mathrm{Ext}^{n+1}(Y,X)$ is trivial for all 
Banach spaces $Y$ (cf.\ \cite{sanchez_homological}), so Question 
\ref{question: id} can be rephrased as asking whether, for all positive 
$n < \omega$, there exists a Banach space $Y$ such that 
$\mathrm{Ext}^n(Y,c_0) \neq 0$.
We are also interested in more specific questions about particular 
$\mathrm{Ext}$ groups involving $c_0$. For example, the following question is asked 
in \cite{sanchez_homological}.

\begin{question} \label{ext_2_question}
  Is it consistent with $\ZFC$ that $\mathrm{Ext}^2(c_0(\omega_1), c_0) = 0$?
\end{question}

Concretely, this reduces to the following question: is it consistent that every 
continuous linear operator $f:c_0(\omega_1) \ra \ell_\infty([\omega]^\omega)/
C(\mathbb{N}^*)$ lifts to a continuous linear operator 
$F:c_0(\omega_1) \ra \ell_\infty([\omega]^\omega)$?
In \cite[Theorem 1]{corrigendum}, it is proven that $\CH$ implies that 
$\mathrm{Ext}^2(c_0(\omega_1), c_0) \neq 0$. It is at least conceivable that 
the opposite answer might follow from an assumption such as the Proper Forcing 
Axiom.

Finally, we wonder whether an assumption like $\mathfrak{b} = 2^{\aleph_0}$ is necessary 
to construct a large almost disjoint family on $\bb{N}^*$.

\begin{question}
  Is it provable in $\ZFC$ that there exists an almost disjoint family on 
  $\bb{N}^*$ of cardinality $2^{\aleph_1}$, or even of cardinality 
  $2^{2^{\aleph_0}}$?
\end{question}

\bibliographystyle{plain}
\bibliography{bib}

@article {sanchez_homological,
    AUTHOR = {Cabello S\'anchez, F. and Castillo, J. M. F. and Garc\'ia, R.},
     TITLE = {Homological dimensions of {B}anach spaces},
   JOURNAL = {Mat. Sb.},
  FJOURNAL = {Matematicheski\u i\ Sbornik},
    VOLUME = {212},
      YEAR = {2021},
    NUMBER = {4},
     PAGES = {91--112},
      ISSN = {0368-8666,2305-2783},
   MRCLASS = {46M18 (18G15)},
  MRNUMBER = {4236253},
MRREVIEWER = {Sergey\ V.\ Astashkin},
       DOI = {10.4213/sm9425},
       URL = {https://doi.org/10.4213/sm9425},
}

@book {separably_injective,
    AUTHOR = {Avil\'es, Antonio and S\'anchez, F\'elix Cabello and Castillo,
              Jes\'us M. F. and Gonz\'alez, Manuel and Moreno, Yolanda},
     TITLE = {Separably injective {B}anach spaces},
    SERIES = {Lecture Notes in Mathematics},
    VOLUME = {2132},
 PUBLISHER = {Springer, [Cham]},
      YEAR = {2016},
     PAGES = {xxii+217},
      ISBN = {978-3-319-14740-6; 978-3-319-14741-3},
   MRCLASS = {46-02 (03C20 03E10 46B20)},
  MRNUMBER = {3469461},
MRREVIEWER = {Antonis\ N.\ Manoussakis},
       DOI = {10.1007/978-3-319-14741-3},
       URL = {https://doi.org/10.1007/978-3-319-14741-3},
}

@misc{CS1,
      title={Lectures on Condensed Mathematics}, 
      author={Peter Scholze},
      year={2026},
      eprint={2605.03658},
      archivePrefix={arXiv},
      primaryClass={math.NT},
      url={https://arxiv.org/abs/2605.03658}, 
      note={arXiv preprint: 2605.03658}
}

@misc{complex,
      title={Condensed Mathematics and Complex Geometry}, 
      author={Dustin Clausen and Peter Scholze},
      year={2026},
      eprint={2605.11731},
      archivePrefix={arXiv},
      primaryClass={math.CV},
      url={https://arxiv.org/abs/2605.11731}, 
      note={arXiv preprint: 2605.11731}
}

@article {phillips,
    AUTHOR = {Phillips, R. S.},
     TITLE = {On linear transformations},
   JOURNAL = {Trans. Amer. Math. Soc.},
  FJOURNAL = {Transactions of the American Mathematical Society},
    VOLUME = {48},
      YEAR = {1940},
     PAGES = {516--541},
      ISSN = {0002-9947,1088-6850},
   MRCLASS = {46.3X},
  MRNUMBER = {4094},
MRREVIEWER = {N.\ Dunford},
       DOI = {10.2307/1990096},
       URL = {https://doi.org/10.2307/1990096},
}

@article {sobczyk,
    AUTHOR = {Sobczyk, Andrew},
     TITLE = {Projection of the space {$(m)$} on its subspace {$(c_0)$}},
   JOURNAL = {Bull. Amer. Math. Soc.},
  FJOURNAL = {Bulletin of the American Mathematical Society},
    VOLUME = {47},
      YEAR = {1941},
     PAGES = {938--947},
      ISSN = {0002-9904},
   MRCLASS = {46.3X},
  MRNUMBER = {5777},
MRREVIEWER = {F.\ J.\ Murray},
       DOI = {10.1090/S0002-9904-1941-07593-2},
       URL = {https://doi.org/10.1090/S0002-9904-1941-07593-2},
}

@article {amir,
    AUTHOR = {Amir, D.},
     TITLE = {Projections onto continuous function spaces},
   JOURNAL = {Proc. Amer. Math. Soc.},
  FJOURNAL = {Proceedings of the American Mathematical Society},
    VOLUME = {15},
      YEAR = {1964},
     PAGES = {396--402},
      ISSN = {0002-9939,1088-6826},
   MRCLASS = {46.10 (46.25)},
  MRNUMBER = {165350},
MRREVIEWER = {T.\ R.\ Jenkins},
       DOI = {10.2307/2034512},
       URL = {https://doi.org/10.2307/2034512},
}

@article {rosenthal,
    AUTHOR = {Rosenthal, Haskell P.},
     TITLE = {On relatively disjoint families of measures, with some
              applications to {B}anach space theory},
   JOURNAL = {Studia Math.},
  FJOURNAL = {Polska Akademia Nauk. Instytut Matematyczny. Studia
              Mathematica},
    VOLUME = {37},
      YEAR = {1970},
     PAGES = {13--36},
      ISSN = {0039-3223,1730-6337},
   MRCLASS = {46.10},
  MRNUMBER = {270122},
MRREVIEWER = {G.\ I.\ Gaudry},
       DOI = {10.4064/sm-37-1-13-36},
       URL = {https://doi.org/10.4064/sm-37-1-13-36},
}

@article {corrigendum,
    AUTHOR = {Avil\'es, A. and Cabello S\'anchez, F. and Castillo, J. M. F.
              and Gonz\'alez, M. and Moreno, Y.},
     TITLE = {Corrigendum to ``{O}n separably injective {B}anach spaces''
              [{A}dv. {M}ath. 234 (2013) 192--216] [{MR}3003929]},
   JOURNAL = {Adv. Math.},
  FJOURNAL = {Advances in Mathematics},
    VOLUME = {318},
      YEAR = {2017},
     PAGES = {737--747},
      ISSN = {0001-8708,1090-2082},
   MRCLASS = {46A22 (46B04 46B08 46B26)},
  MRNUMBER = {3689754},
       DOI = {10.1016/j.aim.2017.08.012},
       URL = {https://doi.org/10.1016/j.aim.2017.08.012},
}

@unpublished{scholze_email,
  author = {Scholze, Peter},
  note = {Personal communication},
  year = {2019}
  }

@incollection {hrusak_ad,
    AUTHOR = {Hru\v{s}\'{a}k, Michael},
     TITLE = {Almost disjoint families and topology},
 BOOKTITLE = {Recent progress in general topology. {III}},
     PAGES = {601--638},
 PUBLISHER = {Atlantis Press, Paris},
      YEAR = {2014},
      ISBN = {978-94-6239-023-2; 978-94-6239-024-9},
   MRCLASS = {03-02 (03E05 03E35 54-02 54H05)},
  MRNUMBER = {3205494},
MRREVIEWER = {J\"org\ D.\ Brendle},
       DOI = {10.2991/978-94-6239-024-9\_14},
       URL = {https://doi.org/10.2991/978-94-6239-024-9_14},
}

@article {whitley,
    AUTHOR = {Whitley, Robert},
     TITLE = {Mathematical {N}otes: {P}rojecting {$m$} onto {$c_0$}},
   JOURNAL = {Amer. Math. Monthly},
  FJOURNAL = {American Mathematical Monthly},
    VOLUME = {73},
      YEAR = {1966},
    NUMBER = {3},
     PAGES = {285--286},
      ISSN = {0002-9890,1930-0972},
   MRCLASS = {99-04},
  MRNUMBER = {1533692},
       DOI = {10.2307/2315346},
       URL = {https://doi.org/10.2307/2315346},
}

\end{document}